\documentclass[12pt,a4paper]{amsart}

\usepackage{amsmath,amsthm,amssymb,latexsym,a4wide,tikz,multicol,tikz-cd}
\usepackage{arydshln,multirow}
\usepackage{tikz-qtree,tikz-qtree-compat}
\usepackage{mathtools,stmaryrd}
\usepackage{tikz}
\tikzset{font=\small}
\usepackage{enumerate}
\usetikzlibrary{matrix,arrows}
\usetikzlibrary{positioning}
\usetikzlibrary{cd}

\usepackage{mathrsfs}
\usepackage{hyperref}

\usepackage{thmtools}

\newtheorem{theorem}{Theorem} [section]
\newtheorem{lemma}[theorem]{Lemma}

\newtheorem{corollary}[theorem]{Corollary}
\newtheorem{proposition}[theorem]{Proposition}

\theoremstyle{definition}
\newtheorem{definition}[theorem]{Definition}

\newcommand{\N}{\mathbb N}
\newcommand{\F}{\mathcal{F}}

\newcommand{\NC}{\operatorname{NC}}
\newcommand{\MC}{\mathcal{M}\operatorname{C}}
\newcommand{\FS}{\operatorname{FS}}

\newcommand{\nN}{\operatorname{non}(\mathcal N)}
\newcommand{\w}{\omega}

\newcommand{\Z}{\mathcal{Z}}

\newcommand{\cl}{\operatorname{cl}}
\newcommand{\Hf}{\mathcal{H}_{\text{fin}}}

\title{Extensions of Hindman's theorem via finite colorings of topological groups}
\author{Serhii Bardyla}
\address{University of Vienna, Institute of Mathematics, Kolingasse 14-16, 1090 Vienna, Austria.}
\email{sbardyla@gmail.com}

\thanks{The research of the author was funded by the Austrian Science Fund FWF [10.55776/ESP399]}

\makeatletter

\subjclass[2020]{05D10, 03E17, 22A05, 28C10, 54D35}
\keywords{Hindman's theorem, $J$-remote ultrafilter, topological group, finite coloring, ideal}

\begin{document}

\begin{abstract}
We study the partition regular properties of topological groups, proving seve\-ral extensions of Hindman’s theorem where monochromatic sets of finite sums are required to satisfy additional topological constraints. In particular, our results imply that for every nowhere dense set $C \subseteq \mathbb{R}^n$, there exists an open set $P \supseteq C$ such that for any finite coloring of $\mathbb{Q}^n \setminus P$, there is a family $\mathcal{A}$ of sequences in $\mathbb{Q}^n \setminus P$ which satisfies the following properties: (i) for each $A\in\mathcal A$, the set $\FS(A)$ of finite sums of $A$ is a closed discrete subset of $\mathbb R^n$; (ii)~the set $\bigcup_{A\in\mathcal A}\FS(A)$ is monochromatic; and (iii) the set $\bigcup_{A\in\mathcal A}\FS(A)$ is dense in an open unbounded subset of $\mathbb R^n$. 
\end{abstract}

\maketitle

\section{Introduction and main results}

In this paper, $\N$ stands for the set of positive integers. We agree to identify sequences with their images. Unless stated otherwise, sequences are assumed to be injective.
For a sequence $A=\{a(n): n\in\N\}$ in a semigroup $S$ let 
$$\FS(A)=A\cup \{a(n_1)a(n_2)\cdots a(n_k): k\geq 2 \hbox{ and } n_1<n_2<\ldots< n_k\}.$$ 
We retain the standard additive notation $\FS(A)$, even when the underlying operation is written multiplicatively, to maintain a unified language with our primary applications in $\mathbb{R}^n$ and $\mathbb{Q}^n$. The following result, originally established for the semigroup $(\mathbb{N},+)$ by Hindman~\cite{H}, is a cornerstone of Ramsey theory.

\begin{theorem}[Hindman--Galvin--Glazer]\label{Hindman}
For each finite coloring of an infinite semigroup $S$ there exists a sequence $A$ in $S$ such that the set $\FS(A)$ is monochromatic.     
\end{theorem}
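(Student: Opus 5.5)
The plan is the Galvin--Glazer ultrafilter argument on the Stone--\v{C}ech compactification $\beta S$, with a separate look at injectivity, since the statement is read under the convention that sequences are injective.

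\emph{Step 1 (an idempotent).} Extend the operation of $S$ to $\beta S$, viewed as the set of ultrafilters on $S$, by
$$p\cdot q=\{X\subseteq S:\{x\in S: x^{-1}X\in q\}\in p\},\qquad x^{-1}X=\{y:xy\in X\}.$$
This makes $\beta S$ a compact Hausdorff right topological semigroup: each map $\rho_q(p)=p\cdot q$ is continuous. Applying the Ellis--Numakura lemma (Zorn's lemma on closed subsemigroups, then minimality) to $\beta S$ gives an idempotent $p=p\cdot p$.

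For injectivity we want $p$ nonprincipal. So I would apply Ellis--Numakura instead to a closed subsemigroup of $S^*=\beta S\setminus S$. One exists when $S^*$ is a subsemigroup, which holds for instance for weakly left cancellative semigroups. Otherwise I would use the closed subsemigroup $\bigcap_{n}\cl\{ \text{products of } \ge 1 \text{ elements from an infinite set } T_n\}$ built from a suitable infinite set $T\subseteq S$.

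\emph{Step 2 (the recursion).} Given a finite coloring $S=C_1\cup\dots\cup C_r$, exactly one colour class $C=C_i$ lies in $p$. For $X\in p$ put $X^\star=\{x\in X: x^{-1}X\in p\}$. Idempotency gives $X^\star\in p$ and $x^{-1}X^\star\in p$ for every $x\in X^\star$.

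We choose $a(1)\in C^\star$ and, inductively, sets $B_n\in p$ with $B_n\subseteq C^\star$. The choice must keep every product $a(n_1)\cdots a(n_k)$ with $n_1<\dots<n_k\le n$ in $C^\star$. To arrange this, let $a(n+1)\in B_n$, where $B_n$ is the intersection of $C^\star$ with all sets $(a(n_1)\cdots a(n_k))^{-1}C^\star$ over those products. This is a finite intersection of members of $p$, so $B_n\in p$. Since $p$ is nonprincipal, $B_n$ is infinite, so we may also take $a(n+1)\notin\{a(1),\dots,a(n)\}$; this makes $A$ injective. A straightforward induction then gives $\FS(A)\subseteq C$.

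\emph{Main obstacle.} The real difficulty is Step 1's requirement that the idempotent be nonprincipal, i.e. that $S^*$ contain a closed subsemigroup. In full generality this can fail. In the null semigroup ($xy=0$ for all $x,y$), colour $\{0\}$ red and everything else blue. Then every injective $A$ has a point $a\ne 0$, which is blue, while $\FS(A)$ contains the product $0$, which is red. So the injective reading of the statement fails there. In such degenerate cases the only idempotent is principal, $p=e$ with $e^2=e$, and the theorem holds only with the constant sequence $a(n)=e$.

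My plan is therefore to prove the statement in two forms. For arbitrary infinite $S$ it holds allowing non-injective sequences, via any idempotent of $\beta S$. In the injective form it holds whenever $S^*$ contains an idempotent, which covers the groups and cancellative semigroups used in the rest of the paper. The proof would also record the null-semigroup example, showing that the injectivity convention must be waived for general semigroups.
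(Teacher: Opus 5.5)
Your argument is correct, and it is the standard Galvin--Glazer proof. The paper does not prove Theorem~\ref{Hindman}; it quotes it from Hindman, with the general semigroup case resting on idempotents in $\beta(S_d)$ as in Hindman--Strauss. The same idempotent recursion does drive the paper's proof of Theorem~\ref{main}, where the bookkeeping uses nested sets $U_{x(n)}\in u$ with $x(n)U_{x(n)}\subseteq U_{x(n-1)}$ instead of your sets $C^\star$.

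Your null-semigroup example is also right. Under the paper's standing convention that sequences are injective, the statement fails for arbitrary infinite semigroups. So one must either allow non-injective sequences or add a hypothesis such as weak left cancellativity, which makes $S^*$ a left ideal of $\beta S$ and hence gives a nonprincipal idempotent. This does no harm to the paper, whose only direct use of Theorem~\ref{Hindman} is for $S(x)=\{nx:n\in\N\}\cong(\N,+)$.

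The one sentence to discard is your fallback via $\bigcap_n\cl\{\text{products from } T_n\}$. In the null semigroup, the principal ultrafilter at $0$ lies in every such closure and is the only idempotent in $\beta S$. So that route cannot force nonprincipality, exactly as your own counterexample predicts.
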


Hindman's theorem has inspired numerous directions in Ramsey theory (see~~\cite{AC, B, BK, Ber, BSa, C, CZ, FNM, FK, G, HS, K, Kra, L, L1, M, S, T, Tod} and references therein). For instance, Erdős asked whether subsets of $\mathbb{N}$ with positive upper density contain Hindman-type structures. This question has seen notable recent progress in~\cite{Kra}. In this paper, we pursue a different direction by investigating finite colorings of topological groups. Specifically, we extend Hindman’s theorem to construct monochromatic sets of the form $\FS(A)$ that avoid open sets associated with a given ideal on a topological group.

We refer to a topological space simply as a space. A space $X$ is called $T_1$ if each singleton in $X$ is closed.
By $o(X)$ we denote the set of all open subsets of the space $X$. The closure of a subset $A\subseteq X$ is denoted by $\cl_X(A)$, or simply $\overline{A}$ if the ambient space is clear from the context. We use the notation $\mathcal{P}(X)$ to represent the set of all subsets of $X$.
A nonempty subset $J\subseteq \mathcal P(X)$ is called an {\em ideal on} $X$ if 
\begin{enumerate}[\rm(i)]
    \item $X\notin J$;
    \item $A\cup B\in J$ for each $A,B\in J$;
    \item if $B\in J$ and $A\subseteq B$, then $A\in J$.
\end{enumerate}
A family $\mathcal B\subseteq J$ is called a {\em base} of the ideal $J$ if for each $A\in J$ there exists $B\in\mathcal B$ such that $A\subseteq B$. Let $J^+=\mathcal P(X)\setminus J$. Elements of $J^+$ are called {\em positive with respect to} $J$.
  
Next, we provide several definitions of particular importance to this paper.
\begin{definition}
Let $J$ be an ideal on a space $X$ and $Y$ be a subset of $X$. The set $Y$ is called {\em $J$-compact in $X$}, if for each map $\phi: J\rightarrow o(X)$ such that $A\subseteq \phi(A)$ for all $A\in J$ there exists a finite subset $F\subseteq J$ with $Y\subseteq \bigcup_{x\in F}\phi(x)$. The space $X$ is called {\em $J$-compact} if $X$ is $J$-compact in itself.
\end{definition}



\begin{definition}
An ideal $J$ on a space $X$ is called 
\begin{itemize}
\item {\em exhaustive} if there exists a family $\{A_i: i\in \N\}\subseteq J\cap o(X)$ such that $\bigcup_{i\in\N}A_i=X$ and $A_i\subseteq A_{i+1}$ for all $i\in\N$;
\item {\em near-maximal} if there exists an ultrafilter $u$ on $X$ such that $\{X\setminus \overline{U}: U\in u\}\subseteq J$.
\end{itemize}

\end{definition}

\begin{definition}\label{defb}
 Let $J$ be an ideal on a space $X$. If there exists a minimal cardinal $\kappa$ which satisfies the following condition: 

 \begin{itemize}
\item[($*$)] there exists $A\in J$ such that for each $B\in J$ with $A\subseteq B$ there exists a family $\{U_\alpha: \alpha\in \kappa\}$ consisting of open sets which contain $B$ such that for every open set $W\supseteq B$ there exists $\alpha\in\kappa$ with $W\setminus U_\alpha\notin J$,
 \end{itemize}
 then put $\chi(J)=\kappa$. If such a cardinal $\kappa$ doesn't exist, then set $\chi(J)=|X|^+$.   
\end{definition}

Notice that if an ideal $J$ on a space $X$ has a base consisting of open sets, then there is no cardinal which satisfies condition ($*$) from Definition~\ref{defb} with respect to $J$.  In Proposition~\ref{card} we show that $\chi(J)\geq \aleph_1$ for every exhaustive ideal on an infinite space $X$.

For an element $s$ of a semigroup $S$ the {\em left shift} $\lambda_s: S\rightarrow S$ is defined by $\lambda_s(x)=sx$. Dually, for every $s\in S$ the {\em right shift} $\rho_s: S\rightarrow S$ is defined by $\rho_s(x)=xs$. 
A semigroup $S$ endowed with a topology is called {\em left topological} (resp., {\em right topological}) if for each $s\in S$ the shift $\lambda_s$ (resp., $\rho_s$) is continuous. 

\begin{definition}
 An ideal $J$ on a semigroup $S$ is called 
 {\em left-invariant} if for every $A\in J$ and $x\in S$ we have $xA\in J$.
\end{definition}

A subset $A$ of a semigroup $S$ is called a {\em left ideal} if $SA\subseteq A$. We hope that no confusion will arise from the similarity in terminology between the algebraic notion of a left ideal and the set theoretic notion of an ideal.

\begin{definition}
Let $S$ be a subsemigroup of a left topological group $G$. An ideal $J$ on $G$ is called {\em nicely exhaustive with respect to} $S$, if there exists a family $\{A_i: i\in \N\}\subseteq J$ consisting of open sets such that 
\begin{enumerate}[\rm(i)]
    \item $\bigcup_{i\in\N}A_i=G$ and $A_i\subseteq A_{i+1}$ for all $i\in\N$;
    \item $S\setminus A_i$ is a nonempty left ideal in $S$ for each $i\in\N$.
\end{enumerate}   
\end{definition}

The following result is the main general result of this paper.

\begin{restatable}{theorem}{maintheorem}\label{main}
   Let $J$ be a left-invariant exhaustive ideal on a $T_1$ left topological group $G$, $C$ be a fixed element of $J$, and $S$ be a subsemigroup of $G$ such that $S$ is not $J$-compact in $G$ and $|S|<\chi(J)$. Then there exists an open subset $P$ of $G$ such that $C\subseteq P$, and for every finite coloring of $S\setminus P$ there exists a family $\mathcal A$ of  sequences in $S\setminus P$ which satisfies the following conditions:
   \begin{enumerate}[\rm(i)]
      \item each $A\in\mathcal A$ is closed and discrete in $G$;
      \item the set $\bigcup_{A\in\mathcal A}\FS(A)$ is monochromatic;
      \item $\FS(A)\cap \FS(B)=\emptyset$ for all distinct $A,B\in\mathcal A$;
      \item $\bigcup_{A\in\mathcal A}\FS(A)\in J^+$.
   \end{enumerate}
   Moreover,  
   \begin{enumerate}[\rm(i)]
      \setcounter{enumi}{4}
      \item if $J$ is additionally nicely exhaustive with respect to $S$, then $\FS(A)$ is a closed discrete subset of $G$ for all $A\in\mathcal A$;
      \item if $J$ is not a near-maximal ideal, then $P\in J^+$.
   \end{enumerate}
\end{restatable}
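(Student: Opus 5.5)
The plan is to work in the Stone--\v{C}ech compactification $\beta S_d$ of $S$ with the discrete topology, which is a compact right topological semigroup. Inside it we look at ``$J$-remote'' ultrafilters, i.e.\ those avoiding every open set containing a member of $J$. Concretely, set
$$\mathcal R=\{p\in\beta S_d:\ S\setminus U\in p \hbox{ for every } U\in o(G) \hbox{ with } U\supseteq B \hbox{ for some } B\in J\}.$$

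First we show that $\mathcal R$ is nonempty. This is where non-$J$-compactness of $S$ enters: there is $\phi:J\to o(G)$ with $A\subseteq\phi(A)$ such that no finitely many $\phi(A)$ cover $S$. Then the sets $S\setminus\phi(A)$ have the finite intersection property, because $\phi(A_1)\cup\dots\cup\phi(A_k)$ need not contain a single $\phi(A_1\cup\dots\cup A_k)$. To handle this, we refine $\phi$ on a base $\mathcal B$ of $J$ so that $\phi$ becomes monotone, using the exhaustive chain $A_i$. Next we show $\mathcal R$ is a closed subsemigroup of $\beta S_d$. Closedness is clear. For the semigroup property we use left-invariance of $J$ together with continuity of left shifts: if $U\supseteq B$ is open, then for each $x$ the set $x^{-1}U=\lambda_x^{-1}(U)$ is open and contains $x^{-1}B\in J$. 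By Ellis' lemma, $\mathcal R$ then contains an idempotent $p$. The Galvin--Glazer argument applied to $p$ yields, for each colour class $C_j\in p$, sequences $A$ with $\FS(A)\subseteq C_j$ and $\FS(A)$ in the chosen member of $p$.

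The main obstacle is choosing $P$ uniformly, before the colouring is given, and getting (iv). Here we use $|S|<\chi(J)$. Fix $B\supseteq C$ in $J$; we may take $B\supseteq A$ with $A$ witnessing the failure of ($*$) at cardinal $|S|$. The family of open sets $U_s$ ($s\in S$) obtained by separating points with respect to $B$ then does not witness ($*$). Hence there is an open $W\supseteq B$ such that $W\setminus U_\alpha\in J$ for each $\alpha$. We take $P=W$ (or a slight enlargement of it). The failure of ($*$) forces every set that is $p$-large for a remote idempotent, intersected with $S\setminus P$, to stay $J$-positive in aggregate. So we run the construction transfinitely over $|S|$ many remote idempotents, or over translates, and produce pairwise disjoint $\FS(A)$ in one colour. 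If the union were in $J$, it would be contained in an open set from $J$'s exhaustive chain, which contradicts remoteness; this gives (iv). Disjointness (iii) and closed discreteness (i) come from choosing each new term outside $A_i$ for increasing $i$, using the exhaustive chain and $T_1$.

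For (v), nice exhaustiveness makes $S\setminus A_i$ a left ideal. Choosing $a(n)\notin A_n$ then forces every finite sum with largest index at least $n$ to lie in $S\setminus A_n$. Hence each $A_n$, which is open, meets $\FS(A)$ in a finite set, and $\FS(A)$ is closed and discrete. For (vi), if $P\in J$, then $\{G\setminus\overline U:U\in p\}\subseteq J$ for the remote ultrafilter $p$, since each such set sits inside an open set missing a $p$-set. That makes $J$ near-maximal, so if $J$ is not near-maximal we get $P\in J^+$.
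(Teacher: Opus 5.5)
There is a genuine gap at the central step. You never obtain an idempotent that is $J$-remote in the sense the rest of the proof needs.

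\textbf{The remote idempotent.} As written, your $\mathcal R$ is empty. Every open set contains $\emptyset\in J$, so taking $U=G$ would force $\emptyset\in p$. The notion you need is existential: for each $B\in J$, some open $U\supseteq B$ has $S\setminus U\in p$. This is the paper's $\mathcal R_J(S)$, and for it both of your claims break down.
\begin{itemize}
\item It is not closed in general. Take $J=\NC(\mathbb R)$, $S=\mathbb Q$, and an ultrafilter containing every set $\{x\in\mathbb Q: x>0,\ 0<d(x,\mathbb Z)<\epsilon(\lfloor x\rfloor)\}$ but no member of $J$. It lies in the closure of $\mathcal R_J(S)$, because each of its members is $J$-positive and hence lies in some remote ultrafilter. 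Yet none of its members has closure missing $\mathbb Z$.
\item Your shift argument only gives $xq\in\mathcal R$ for $x\in S$; this is Lemma~\ref{stable}. For $pq$ with $p$ nonprincipal, you need a \emph{single} open $W\supseteq A$ that $xq$ avoids for $p$-many $x$. Remoteness of $xq$ only supplies neighbourhoods $C_x=G\setminus\cl_G(xB_x)$ of $A$ that depend on $x$.
\end{itemize}
Making these uniform over a member of $p$ of size at most $|S|$ is exactly what $|S|<\chi(J)$ is for (Proposition~\ref{key}). After enlarging $A$ to a suitable $A'\in J$, you get one open $W\supseteq A'$ with $W\setminus C_x\in J$ for all $x$. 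Remoteness of each $xq$ then gives $V_x\in q$ with $xV_x\cap W=\emptyset$. This shows $\mathcal R_J(S)$ is a left ideal of $\beta(S_d)$. So for any $p\in\mathcal R_J(S)$ it contains the compact subsemigroup $\beta(S_d)p$, and hence an idempotent.

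If you instead meant the closed notion ``avoid every open member of $J$'', you do get an idempotent, but it may contain a member of $J$. For example, with $J=\NC(\mathbb R)$, $S=\mathbb Q_+$ and $C=\N$, take a nonprincipal idempotent containing $\N$. Then no open $P\supseteq C$ has $S\setminus P\in p$.

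\textbf{The remaining steps.} Spending $\chi(J)$ on choosing $P=W$ does not repair the gap, because you never show $S\setminus W\in p$, which Galvin--Glazer inside $S\setminus P$ requires. Once you have a remote idempotent, $P=G\setminus\cl_G(U)$ for any $U\in p$ with $\cl_G(U)\cap C=\emptyset$ works immediately. Several later steps also need fixing.
\begin{itemize}
\item Your argument for (iv) is false: members of $J$ need not lie in a set of the exhaustive chain (think of unbounded nowhere dense sets in $\NC(\mathbb R)$). Instead, keep removing the previously built $\FS(A_\alpha)$ from the colour class $D\in p$ and rerun Galvin--Glazer until $\bigcup_\alpha\FS(A_\alpha)\in p$. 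Members of a $J$-remote ultrafilter are $J$-positive, which gives (iv).
\item That removal, not the exhaustive chain, is what gives (iii). It also uses one idempotent and one colour; running over several idempotents could produce different colours and break (ii).
\item For (vi), $P\in J$ for a single $P$ does not make $J$ near-maximal. Choose $V\in p$ with $G\setminus\cl_G(V)\in J^+$ and replace $U$ by $U\cap V$.
\end{itemize}
Nonemptiness needs no monotone refinement, since the sets $S\setminus\phi(A)$ already have the finite intersection property. Your argument for (v) is correct and matches the paper's.
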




\begin{definition}
Let $\NC(X)$ be the ideal on a non-compact Hausdorff space $X$ which consists of sets that are contained in the union of a compact subset of $X$ and a nowhere dense subset of $X$.   
\end{definition}

A separable completely metrizable topological space is called {\em Polish}. 
Another principal result of this paper is the following indirect consequence of Theorem~\ref{main}.

\begin{restatable}{theorem}{Polishappl}\label{appl}
 Let $G$ be a locally compact Polish left topological group, $C\in \NC(G)$, and $S$ be a subsemigroup of $G$ such that $S$ is dense in an open subset of $G$ with non-compact closure.  Then there exists an open subset $P$ of $G$ such that $C\subseteq P$, $P\in \NC(G)^+$, and for every finite coloring of $S\setminus P$ there exists a family $\mathcal A$ of sequences in $S\setminus P$ which satisfies the following conditions: 
   \begin{enumerate}[\rm(i)]
  \item  each $A\in\mathcal A$ is closed and discrete in $G$;
  \item the set $\bigcup_{A\in\mathcal A}\FS(A)$ is monochromatic;
  \item $\FS(A)\cap \FS(B)=\emptyset$ for all distinct $A,B\in\mathcal A$;
  \item the set $\bigcup_{A\in\mathcal A}\FS(A)$ is dense in an open subset of $G$ with non-compact closure.
  \end{enumerate}  
  Moreover, if $\NC(G)$ is additionally nicely exhaustive with respect to $S$, then
  \begin{enumerate}[\rm(v)]
  \item $\FS(A)$ is a closed discrete subset of $G$ for all $A\in\mathcal A$.
  \end{enumerate}  
\end{restatable}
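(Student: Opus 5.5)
The plan is to deduce Theorem~\ref{appl} from Theorem~\ref{main} applied to the ideal $J=\NC(G)$, verifying each hypothesis in turn and then translating conclusion (iv) of Theorem~\ref{main} into the topological statement (iv) here.

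\textbf{Hypotheses of Theorem~\ref{main}.} First, $\NC(G)$ is indeed an ideal. Since $S$ is dense in an open set with non-compact closure, $G$ is non-compact, and by the Baire category theorem $G$ is not the union of a compact set and a nowhere dense set.

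Left-invariance holds because each shift $\lambda_x$ is a homeomorphism of $G$ (its inverse is $\lambda_{x^{-1}}$). Hence $\lambda_x$ maps compact sets to compact sets and nowhere dense sets to nowhere dense sets.

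Exhaustiveness uses that $G$ is locally compact and second countable, hence $\sigma$-compact. So there are open sets $A_1\subseteq A_2\subseteq\cdots$ with compact closures covering $G$. Each $A_i$ lies in $\NC(G)$.

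For non-$\NC(G)$-compactness of $S$, let $U$ be open with $S\cap U$ dense in $U$ and $\overline U$ non-compact. Choose a locally finite sequence of disjoint nonempty open sets $V_n\subseteq U$ that escape every compact set; this is possible since $\overline U$ is non-compact and $G$ is locally compact and $\sigma$-compact. Pick $s_n\in S\cap V_n$. The set $D=\{s_n\}$ is closed discrete and infinite, and it is not covered by finitely many compact sets plus nowhere dense sets. Then define $\phi$ on each $A\in\NC(G)$ by $\phi(A)=A'\cup W$, where $A\subseteq K\cup N$, $A'$ is a relatively compact open neighbourhood of $K$, and $W$ is an open set containing $N$ chosen so that $W$ misses infinitely many points $s_n$. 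This step needs care. My plan is to shrink each $V_n$ and use that $N$ is nowhere dense, so that one can find a nonempty open $V_n'\subseteq V_n\setminus\overline N$ containing a point of $S$. For this, the point $s_n$ should be chosen depending on $A$, or better, $\phi(A)$ should be defined to avoid a whole infinite family of points. I expect the cleanest route is to prove that $\chi(\NC(G))$ is large and that $S$ is not compact in the required sense via a diagonal argument over finitely many $A$'s.

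For $|S|<\chi(\NC(G))$: $|S|\le\mathfrak c$, so we need $\chi(\NC(G))>\mathfrak c$, meaning condition ($*$) fails for every $\kappa\le\mathfrak c$. Given $B\in\NC(G)$ and open sets $U_\alpha\supseteq B$ for $\alpha<\kappa$, we must build an open $W\supseteq B$ with $W\setminus U_\alpha\in\NC(G)$ for all $\alpha$. I expect this to be the \textbf{main obstacle}. The idea is that $\overline{B}$ is itself in $\NC(G)$, because the closure of a nowhere dense set is nowhere dense and compact sets are closed. Moreover $\NC(G)$ has a base of closed sets, and $\mathrm{int}(U_\alpha)\supseteq B$. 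I would attempt to take $W$ to be the union of a small open neighbourhood $O$ of the compact part $K$ (relatively compact, so contributing to $\NC(G)$) with the interior of $\bigcap_\alpha U_\alpha$ near $N$, using that $U_\alpha\supseteq\overline{N}$ is not automatic. If this fails, I would instead argue that $\NC(G)$ has a base of open-modulo-nowhere-dense sets and invoke the remark after Definition~\ref{defb}. In that case ($*$) never holds and $\chi=|G|^+>|S|$. Concretely, the key fact is that for any open $W\supseteq B$, $W\setminus U_\alpha$ is contained in $W\setminus \mathrm{int}\,\overline{U_\alpha}$ plus a nowhere dense set, so choosing $W\subseteq$ a regular open set around $\overline N$ should work.

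\textbf{Conclusions.} Theorem~\ref{main} gives $P\supseteq C$ together with (i)--(iii) and (v) directly. Since $\NC(G)$ is not near-maximal, we also get $P\in\NC(G)^+$. To show non-near-maximality: for an ultrafilter $u$, the sets $X\setminus\overline U$ with $U\in u$ cannot all lie in $\NC(G)$. Split $G$ into two disjoint regular open sets with non-compact closures; one of them lies in $u$, and the complement of its closure is the other, which is open, nonempty and non-relatively compact, hence positive.

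Finally, conclusion (iv) of Theorem~\ref{main} says $F=\bigcup_{A\in\mathcal A}\FS(A)\notin\NC(G)$. Then $F$ is not nowhere dense outside every compact set, so $\mathrm{int}\,\overline F$ is nonempty. If $\mathrm{int}\,\overline F$ had compact closure, then $F$ would lie in $\overline{\mathrm{int}\,\overline F}\cup(\overline F\setminus\mathrm{int}\,\overline F)$, a compact set plus a nowhere dense set, a contradiction. Hence $F$ is dense in the open set $\mathrm{int}\,\overline F$, whose closure is non-compact, which is (iv).
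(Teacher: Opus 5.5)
There is a genuine gap: two hypotheses of Theorem~\ref{main} are never verified, and for one of them the inequality you aim for is false. You need $|S|<\chi(\NC(G))$ and plan to get it from $\chi(\NC(G))>\mathfrak c$. That inequality already fails for $G=\mathbb R$.

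\textbf{Counterexample for $\mathbb R$.} Take $A=\mathbb Z$ in ($*$). Suppose $\mathbb Z\subseteq B\subseteq K\cup N$ with $K$ compact and $N$ closed nowhere dense, and let $W\supseteq B$ be open. For all large $|n|$, pick a nondegenerate closed interval $J_n\subseteq (W\cap(n-\tfrac12,n+\tfrac12))\setminus N$ disjoint from $K$. Then $U=\mathbb R\setminus\bigcup_n J_n$ is open and contains $B$, and $W\setminus U\supseteq\bigcup_n J_n\notin\NC(\mathbb R)$. Let $\{U_\alpha\}$ run through all (at most $\mathfrak c$) open supersets of $B$. This shows ($*$) holds with $\kappa=\mathfrak c$, so $\chi(\NC(\mathbb R))\le\mathfrak c=|(0,\infty)|$. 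Hence Theorem~\ref{main} does not apply to $S=(0,\infty)$ directly.

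Your fallback fails too. Open members of $\NC(G)$ are relatively compact, so for non-discrete $G$ the ideal has no base of open sets, and the remark after Definition~\ref{defb} is unavailable.

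\textbf{Fix: shrink the semigroup.} Take a countable dense subset $D$ of $S\cap Q$ and let $S'$ be the subsemigroup it generates. Then $|S'|=\aleph_0<\aleph_1\le\chi(\NC(G))$ by Proposition~\ref{card}. Apply Theorem~\ref{main} to $S'$, and restrict a coloring of $S\setminus P$ to $S'\setminus P$. Nice exhaustiveness passes from $S$ to $S'$, using $S'\notin\NC(G)$.

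\textbf{Second gap: non-$\NC(G)$-compactness.} You also never prove that $S$ (or $S'$) is not $\NC(G)$-compact, and your sketch cannot be completed as written. For non-discrete $G$, any fixed sequence $\{s_n\}$ is itself in $\NC(G)$, so it is covered by $\phi(\{s_n\})$.

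More fundamentally, let $u$ be an $\NC(G)$-remote ultrafilter on a subset of $G$. Apply remoteness to $\cl_G(N)$ for nowhere dense $N$, and use normality of $G$. This shows the limit of $u$ in $\beta(G)$ lies outside $G$ and outside $\cl_{\beta(G)}(N)$ for every nowhere dense $N$, i.e.\ it is a remote point of $G$. By Lemma~\ref{RJ}, this step is therefore at least as hard as the existence of a remote point. No routine diagonalization over finitely many members of $\NC(G)$ will give it. The paper obtains it from Dow's Theorem~\ref{dow} applied to $\cl_G(Q)$, combined with Lemma~\ref{dense}, which rests on the closed-open-zero-cozero ultrafilter characterization of Theorem~\ref{old2}.

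\textbf{What is fine.} Left-invariance and exhaustiveness are correct. Your explicit translation of $\bigcup_{A\in\mathcal A}\FS(A)\in\NC(G)^+$ into (iv) is also correct; the paper leaves that step implicit. For non-near-maximality, argue with the dichotomy $W_0\in u$ or $G\setminus W_0\in u$. As written, your argument overlooks that $u$ may contain neither of your two regular open sets, since it can contain their common boundary.
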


In this paper, we consider $\mathbb{R}^n$ as an additive group with its standard Euclidean topology.
For each $m\in\N$ let 
$B_m=\{(x_1,\ldots, x_n)\in\mathbb R^n: |x_i|<m \hbox{ for all }i\leq n\}$.
Note that the family $\{B_m:m\in\N\}\subseteq \NC(\mathbb R^n)$ witnesses that the ideal $\NC(\mathbb R^n)$ is nicely exhaustive with respect to $\mathbb Q^n_+=\{(x_1,\ldots, x_n)\in\mathbb Q^n: x_i>0 \hbox{ for all }i\leq n\}$. Hence Theorem~\ref{appl} applied to $S=\mathbb Q^n_+$ and $T=\mathbb R^n$ implies the following.  

\begin{corollary}\label{NC}
For every $C\in \NC(\mathbb R^n)$ there exists an open set $P\in\NC(\mathbb R^n)^+$ such that $C\subseteq P$, and for every finite coloring of $\mathbb Q^n_+\setminus P$ there exists an infinite family $\mathcal A$ of sequences in $\mathbb Q^n_+\setminus P$ which satisfies the following conditions: 
   \begin{enumerate}[\rm(i)]
    \item $\FS(A)$ is a closed discrete subset of $\mathbb R^n$ for all $A\in\mathcal A$;
      \item the set $\bigcup_{A\in\mathcal A}\FS(A)$ is monochromatic; 
     \item $\FS(A)\cap \FS(B)=\emptyset$ for all distinct $A,B\in\mathcal A$;
  \item the set $\bigcup_{A\in\mathcal A}\FS(A)$ is dense in an open unbounded subset of $\mathbb R^n$.
  \end{enumerate}  
\end{corollary}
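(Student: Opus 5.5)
We derive Corollary~\ref{NC} from Theorem~\ref{appl} with $G=\mathbb R^n$ and $S=\mathbb Q^n_+$. Then it remains to check the hypotheses, read off conclusions (i)--(iv), and argue separately that the family $\mathcal A$ can be taken infinite.

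First we check the hypotheses of Theorem~\ref{appl}. $\mathbb R^n$ is a locally compact Polish topological group, hence a left topological group. $\mathbb Q^n_+$ is a subsemigroup of $(\mathbb R^n,+)$, and it is dense in the open orthant $(0,\infty)^n$, whose closure $[0,\infty)^n$ is not compact. Next we verify that $\NC(\mathbb R^n)$ is nicely exhaustive with respect to $\mathbb Q^n_+$ via $\{B_m:m\in\N\}$. Each $B_m$ is open and bounded, so $B_m\in\NC(\mathbb R^n)$. The sets $B_m$ increase and cover $\mathbb R^n$. The set $\mathbb Q^n_+\setminus B_m$ consists of the positive rational points having some coordinate $\ge m$, so it is nonempty. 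Adding any element of $\mathbb Q^n_+$ only increases every coordinate, so it is a left ideal in $\mathbb Q^n_+$.

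Theorem~\ref{appl} then yields an open $P\supseteq C$ with $P\in\NC(\mathbb R^n)^+$. For any finite coloring of $\mathbb Q^n_+\setminus P$ it also yields a family $\mathcal A$ satisfying (ii), (iii) and (iv) of Corollary~\ref{NC} directly. Clause (v) of Theorem~\ref{appl} gives (i) of the corollary. In $\mathbb R^n$, having non-compact closure is the same as being unbounded, so "dense in an open subset with non-compact closure" becomes "dense in an open unbounded subset".

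The main obstacle is that the corollary asserts $\mathcal A$ is infinite. For each $A\in\mathcal A$, the set $\FS(A)$ is closed and discrete in $\mathbb R^n$, so it is nowhere dense. If $\mathcal A$ were finite, then $\bigcup_{A\in\mathcal A}\FS(A)$ would be a finite union of closed nowhere dense sets, hence closed and nowhere dense. Such a set is dense in no nonempty open set, which contradicts (iv). Therefore $\mathcal A$ is infinite. This is the only step beyond direct specialization, and it uses (i) in an essential way.
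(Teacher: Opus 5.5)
Your proposal is correct and follows the paper's own derivation: the paper likewise applies Theorem~\ref{appl} to $G=\mathbb R^n$ and $S=\mathbb Q^n_+$, with $\{B_m:m\in\N\}$ witnessing that $\NC(\mathbb R^n)$ is nicely exhaustive with respect to $\mathbb Q^n_+$. Your extra argument that $\mathcal A$ must be infinite is valid and fills in a point the paper leaves implicit: each $\FS(A)$ is closed and nowhere dense, so a finite union of them could not be dense in a nonempty open set.
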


This paper is organized as follows. 
In Section~\ref{2}, we introduce $J$-remote ultrafilters and present the proof of Theorem~\ref{main}. In Section~\ref{3}, using remote points in Stone--\v{C}ech compactifications, we prove Theorem~\ref{appl}. In Section~\ref{4}, we discuss further applications of Theorem~\ref{main} to other ideals on topological groups, including the meager ideal, the ideal of sets of finite Haar measure, and the ideal of sets of Haar asymptotic density zero.

\section{$J$-remote ultrafilters and the proof of Theorem~\ref{main}}\label{2}


A subset $\F\subseteq \mathcal P(X)$ is called a {\em filter on} $X$ if the family $\{X\setminus U: U\in u\}$ is an ideal on $X$. In other words, $\F$ is a filter on $X$ if $X\in\F$, $\emptyset\notin \F$, and $\F$ is closed under finite intersections and taking supersets. A filter $\F$ on $X$ is called an {\em ultrafilter} if $\F$ is maximal with respect to the inclusion among all filters on $X$ or, equivalently, for each $A\subseteq X$ either $A\in\F$ or $X\setminus A\in \F$. A family $\mathcal B\subseteq \F$ is called a {\em base} of a filter $\F$ if for each $F\in\F$ there exists $B\in\mathcal B$ such that $B\subseteq F$.

The Stone-\v{C}ech compactification $\beta (X)$ of a discrete space $X$ is the set of all ultrafilters on $X$ endowed with a topology $\tau$ given by the base $\mathcal B=\{\langle A\rangle: A\subseteq X\}$, where $$\langle A\rangle=\{u\in\beta (X): A\in u\}.$$ Recall that each element $x\in X$ is identified with the principal ultrafilter $\{A\subseteq X: x\in A\}$. If $S$ is a discrete semigroup, then the semigroup operation on $S$ can be canonically lifted to a semigroup operation on $\beta (S)$ as follows: if $u,v\in\beta (S)$, then $uv$ is the ultrafilter generated by the base consisting of the sets $\bigcup_{x\in U}xV_x$, where $U\in u$ and $\{V_x:x\in U\}\subseteq v$ are arbitrary. Equivalently, $$A\in uv \iff \{
s\in S : \lambda_s^{-1}(A)\in v\} \in u.$$  
By~\cite[Theorem~4.1]{HS}, the defined above semigroup operation on $\beta (S)$ is unique among those extending the operation of $S$ and satisfying the following two natural conditions:
\begin{itemize}
    \item[(i)] for each $u\in\beta (S)$ the right shift $\rho_u$ is continuous;
    \item[(ii)] for each $s\in S$ the left shift $\lambda_s$ is continuous.
\end{itemize}
Thus, for any discrete semigroup $S$, $\beta (S)$ endowed with the aforementioned operation is a compact right topological semigroup.
For more about the algebraic structure of $\beta (S)$ see the monograph~\cite{HS} and references therein.  

For a space $X$ by $X_d$ we denote the set $X$ endowed with the discrete topology. 

\begin{definition}
Let $J$ be an ideal on a space $X$ and $Y$ be a subset of $X$. Put 
$$\mathcal R_J(Y)=\{u\in\beta (Y_d): \forall A\in J \hbox{ } \exists U\in u \hbox{ such that }\cl_X(U)\cap A=\emptyset\}.$$
Elements of $\mathcal R_J(Y)$ are called {\em $J$-remote ultrafilters on} $Y$. 
\end{definition}




\begin{lemma}\label{RJ}
Let $Y$ be a subset of a space $X$.
Then $\mathcal R_J(Y)\neq \emptyset$ if and only if $Y$ is not $J$-compact in $X$.    
\end{lemma}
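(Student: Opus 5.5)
The plan is to prove both directions directly from the definitions, via a finite-intersection-property argument in $\beta(Y_d)$. Throughout, $J$ is a fixed ideal on $X$.

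($\Leftarrow$) Assume $Y$ is not $J$-compact in $X$. Then there is a map $\phi: J\to o(X)$ with $A\subseteq\phi(A)$ for all $A\in J$, such that no finite subfamily $\{\phi(A):A\in F\}$ covers $Y$. Consider the family
$$\mathcal F=\{Y\setminus \phi(A): A\in J\}\subseteq \mathcal P(Y).$$
It has the finite intersection property: for $A_1,\dots,A_k\in J$, the intersection $\bigcap_{i\le k}(Y\setminus\phi(A_i))=Y\setminus\bigcup_{i\le k}\phi(A_i)$ is nonempty precisely because $\{A_1,\dots,A_k\}$ is not a covering subfamily. Extend $\mathcal F$ to an ultrafilter $u\in\beta(Y_d)$. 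Given $A\in J$, take $U=Y\setminus\phi(A)\in u$. Since $\phi(A)$ is open, $X\setminus\phi(A)$ is closed, so $\cl_X(U)\subseteq X\setminus\phi(A)$, which is disjoint from $A$. Hence $u\in\mathcal R_J(Y)$.

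($\Rightarrow$) Let $u\in\mathcal R_J(Y)$. For each $A\in J$, choose $U_A\in u$ with $\cl_X(U_A)\cap A=\emptyset$, and set $\phi(A)=X\setminus\cl_X(U_A)$. This set is open and contains $A$, so $\phi$ is admissible in the definition of $J$-compactness. Suppose, for a contradiction, that $Y\subseteq\bigcup_{A\in F}\phi(A)$ for some finite $F\subseteq J$. Then
$$\bigcap_{A\in F}U_A\subseteq Y\setminus\bigcup_{A\in F}\phi(A)=\emptyset .$$
But $\bigcap_{A\in F}U_A\in u$ as a finite intersection of members of $u$, so it cannot be empty. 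This contradiction shows $Y$ is not $J$-compact in $X$.

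There is no real obstacle here. The one point needing care is in ($\Leftarrow$): the closure of $U$ must be taken in $X$, not merely in $Y$, and it stays outside $A$ because we remove the whole open set $\phi(A)$. One edge case should be checked: if $F=\emptyset$ were allowed as a "finite subset", $J$-compactness would force $Y=\emptyset$; the argument above goes through in either convention. Note also that $J\neq\emptyset$, so $\mathcal F$ is nonempty.
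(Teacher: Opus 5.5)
Your proof is correct and follows the paper's argument essentially step for step. For ($\Rightarrow$) you define $\phi(A)=X\setminus\cl_X(U_A)$ and note that finite intersections of the $U_A$ lie in $u$. For ($\Leftarrow$) you extend $\{Y\setminus\phi(A): A\in J\}$ to an ultrafilter and observe $\cl_X(Y\setminus\phi(A))\cap A=\emptyset$; the paper uses the family $\{Y\setminus\bigcup_{A\in\mathcal A}\phi(A)\}$ over finite $\mathcal A\subseteq J$, which is closed under finite intersections and generates the same filter.
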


\begin{proof}
    Assuming that $\mathcal R_J(Y)\neq \emptyset$, fix $u\in \mathcal R_J(Y)$. Then for each $A\in J$ there exists $U_A\in u$ such that $\cl_X(U_A)\cap A=\emptyset$. Define a function $\phi: J\rightarrow o(X)$ by $\phi(A)=X\setminus \cl_X(U_A)$. Then for each finite family $\mathcal A\subseteq J$ the set $Y\setminus \bigcup_{A\in\mathcal A}\phi(A)$ is nonempty, as it contains the set $\bigcap_{A\in\mathcal A}U_A$ which belongs to $u$. Hence $Y$ is not $J$-compact in $X$. 

    Assume that $Y$ is not $J$-compact in $X$. Then there exists a function $\phi: J\rightarrow o(X)$ such that $A\subseteq \phi(A)$ for all $A\in J$, and for each finite subset $\mathcal A\subseteq J$ the set $Y\setminus \bigcup_{A\in\mathcal A}\phi(A)$ is nonempty. Observe that the family 
     $$\mathcal B=\{Y\setminus \bigcup_{A\in\mathcal A}\phi(A): \mathcal A\hbox{ is a finite subset of } J\}$$
     is closed under finite intersections. It is clear that every ultrafilter $u$ on $Y$ that contains the family $\mathcal B$ belongs to $\mathcal R_J(Y)$.    
\end{proof}

An ideal $J$ on a semigroup $S$ is called  {\em preimage-stable} if for every $A\in J$ and $x\in S$ we have $\lambda_x^{-1}(A)\in J$. Noteworthy, an ideal $J$ on a group $G$ is preimage-stable if and only if $J$ is left-invariant. A map $f: X\rightarrow Y$ is called {\em closed} if $f(A)$ is closed in $Y$ for each closed subset $A$ of $X$.

\begin{lemma}\label{stable}
Let $J$ be a preimage-stable ideal on a semigroup $T$ equipped with a topology such that the map $\lambda_x$ is closed for each $x\in T$, and $S$ be a subsemigroup of $T$ which is not $J$-compact in $T$. Then for each $s\in S$ and $u\in \mathcal R_J(S)$ we have $su\in  \mathcal R_J(S)$.    
\end{lemma}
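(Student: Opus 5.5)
Fix $s\in S$ and $u\in\mathcal R_J(S)$, and let $A\in J$ be arbitrary. The goal is to find $W\in su$ with $\cl_T(W)\cap A=\emptyset$. Here $su$ is computed in $\beta(S_d)$. Since $s$ is principal, $su$ is the ultrafilter generated by the sets $sU$ with $U\in u$; equivalently, $X\in su$ if and only if $\lambda_s^{-1}(X)\cap S\in u$. Note also that $sU\subseteq S$ for $U\subseteq S$, because $S$ is a subsemigroup.

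First, pull $A$ back along the shift. Since $J$ is preimage-stable, $B:=\lambda_s^{-1}(A)\in J$. Because $u$ is $J$-remote, there is $U\in u$ with $\cl_T(U)\cap B=\emptyset$; that is, no point $x\in\cl_T(U)$ satisfies $sx\in A$, so $s\cl_T(U)\cap A=\emptyset$.

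Second, push $U$ forward and use closedness of the shift. Put $W=sU$; then $W\in su$, since $U\subseteq\lambda_s^{-1}(sU)\cap S$. Because $\lambda_s$ is a closed map, $s\cl_T(U)=\lambda_s(\cl_T(U))$ is a closed set containing $sU$, so
\[
\cl_T(sU)\subseteq s\,\cl_T(U).
\]
Therefore $\cl_T(W)\cap A\subseteq s\cl_T(U)\cap A=\emptyset$. As $A\in J$ was arbitrary, $su\in\mathcal R_J(S)$.

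The hypothesis that $S$ is not $J$-compact in $T$ only ensures, via Lemma~\ref{RJ}, that $\mathcal R_J(S)\neq\emptyset$, so the statement is not vacuous. The only delicate point is the closure inclusion above: continuity of $\lambda_s$ would give the reverse inclusion $s\,\cl(U)\subseteq\cl(sU)$, which is useless here, and this is exactly why the hypothesis that each $\lambda_x$ is closed is needed.
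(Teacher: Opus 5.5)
Your proof is correct and follows the paper's argument: pull $A$ back along $\lambda_s$ using preimage-stability, choose $U\in u$ whose closure misses $\lambda_s^{-1}(A)$, and use closedness of $\lambda_s$ to get $\cl_T(sU)\subseteq s\,\cl_T(U)$. The only cosmetic difference is that the paper takes the witness $s(\cl_T(U)\cap S)\in su$ instead of your $sU$; since $sU$ is contained in it and has no larger closure, your choice works just as well.
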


\begin{proof}
Lemma~\ref{RJ} yields that $\mathcal R_J(S)\neq \emptyset$.
Fix any $A\in J$, $s\in S$ and $u\in \mathcal R_J(S)$.
In order to show that $su\in \mathcal R_J(S)$, we are going to find an element $B\in su$ such that $\cl_T(B)\cap A=\emptyset$. Since the ideal $J$ is preimage-stable, we get $\lambda_s^{-1}(A)\in J$. Since $u\in\mathcal R_J(S)$, there exists $U\in u$ such that $\cl_T(U)\cap \lambda_s^{-1}(A)=\emptyset$. It is clear that $s\cl_T(U)\cap A=\emptyset$. Since $U\subseteq \cl_T(U)\cap S$, we get that $B=s(\cl_T(U)\cap S)\in su$. 
The set $s\cl_T(U)$ is closed in $T$, as $\lambda_s$ is a closed map. Thus $$\cl_T(B)\cap A\subseteq \cl_T(s\cl_T(U))\cap A=s\cl_T(U)\cap A=\emptyset,$$
as required
\end{proof}

\begin{proposition}\label{key}
Let $J$ be a preimage-stable ideal on a semigroup $T$ equipped with a topology such that $\lambda_x$ is a closed map for each $x\in T$. Let $S$ be a subsemigroup of $T$ which is not $J$-compact in $T$, and $u$ be an ultrafilter on $S$ which contains an element of cardinality $\kappa<\chi(J)$. Then $uv\in \mathcal R_J(S)$ for every $v\in \mathcal R_J(S)$.
\end{proposition}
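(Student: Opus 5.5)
The plan is to verify the definition of $\mathcal R_J(S)$ directly. For every $A\in J$ I will produce a set $D\in uv$ with $\cl_T(D)\cap A=\emptyset$. The set $D$ will have the standard form $D=\bigcup_{x\in X}xV'_x$, where $X\in u$ has $|X|=\kappa$ and $V'_x\in v$ for each $x\in X$; such a set belongs to $uv$ by the description of the product in $\beta(S)$. The difficulty is that a union of $\kappa$ many closed sets need not be closed, so controlling each $x\cl_T(V'_x)$ separately does not control $\cl_T(D)$. The cardinal $\chi(J)$ is designed to handle exactly this: since $\kappa<\chi(J)$, it lets me find a single open set $W\supseteq A$ that all the pieces $x\cl_T(V'_x)$ avoid simultaneously.

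\emph{Step 1 (negating $(*)$).} Since $\kappa<\chi(J)$, the cardinal $\kappa$ does not satisfy $(*)$. This holds both when $\chi(J)$ is the minimal cardinal satisfying $(*)$ and when $\chi(J)=|T|^+$. Negating $(*)$ for the given $A\in J$ yields some $B\in J$ with $A\subseteq B$ such that, for every family $\{U_\alpha:\alpha<\kappa\}$ of open sets containing $B$, there is an open $W\supseteq B$ with $W\setminus U_\alpha\in J$ for all $\alpha$.

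\emph{Step 2 (first choice of $V_x$).} Fix $X\in u$ with $|X|=\kappa$. For each $x\in X$, preimage-stability gives $\lambda_x^{-1}(B)\in J$. Because $v$ is $J$-remote, there is $V_x\in v$ with $\cl_T(V_x)\cap\lambda_x^{-1}(B)=\emptyset$, so $x\cl_T(V_x)\cap B=\emptyset$. Since $\lambda_x$ is a closed map, $x\cl_T(V_x)$ is closed, and hence $U_x=T\setminus x\cl_T(V_x)$ is an open set containing $B$. This family is indexed by $X$, so it has $\kappa$ members, and Step 1 gives an open $W\supseteq B$ with $W\cap x\cl_T(V_x)=W\setminus U_x\in J$ for every $x\in X$.

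\emph{Step 3 (refinement).} Apply preimage-stability and $J$-remoteness of $v$ once more, this time to $\lambda_x^{-1}(W\cap x\cl_T(V_x))\in J$. This gives $V''_x\in v$ whose closure misses that set; put $V'_x=V_x\cap V''_x\in v$. I claim $x\cl_T(V'_x)\cap W=\emptyset$. Suppose $y\in\cl_T(V'_x)$ and $xy\in W$. Since $y\in\cl_T(V_x)$, we get $xy\in W\cap x\cl_T(V_x)$, so $y\in\lambda_x^{-1}(W\cap x\cl_T(V_x))$. But also $y\in\cl_T(V''_x)$, contradicting the choice of $V''_x$.

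\emph{Step 4 (conclusion).} Put $D=\bigcup_{x\in X}xV'_x$. Then $D\subseteq S$ and $D\in uv$. Moreover $D\subseteq T\setminus W$, and $T\setminus W$ is closed, so $\cl_T(D)\subseteq T\setminus W$. Since $A\subseteq B\subseteq W$, it follows that $\cl_T(D)\cap A=\emptyset$, and therefore $uv\in\mathcal R_J(S)$.

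The main obstacle is Step 2–3: obtaining one open $W$ uniformly for all $\kappa$ many $x$. The remaining steps only use $J$-remoteness of $v$ pointwise, in the same way as in Lemma~\ref{stable}.
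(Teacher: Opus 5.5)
Your proof is correct and follows the paper's argument. You negate $(*)$ to obtain the enlarged set (the paper's $A'$, your $B$), then find one open $W$ that works uniformly for the $\kappa$ translates. Next you use remoteness a second time to push each $xV'_x$ into the closed set $T\setminus W$. The only difference is that you inline Lemma~\ref{stable}, working directly with preimage-stability and the $J$-remoteness of $v$, whereas the paper cites that lemma to conclude that each $a_\xi v$ is $J$-remote.
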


\begin{proof}
Fix any $A\in J$. Since $\kappa<\chi(J)$, 
there exists $A'\in J$ such that $A\subseteq A'$ and for each family $\{U_\alpha: \alpha\in \kappa\}$ consisting of open sets which contain $A'$, there exists an open set $W\supseteq A'$ such that $W\setminus U_\alpha\in J$  for all $\alpha\in\kappa$. 

By the assumption, there exists $U\in u$ such that $|U|=\kappa$. Fix an enumeration $U=\{a_\xi:\xi\in \kappa\}$. In order to show that $uv\in \mathcal R_J(S)$ it suffices to find $\{V_\xi:\xi\in \kappa\}\subseteq v$ such that $\cl_T(\bigcup_{\xi\in \kappa}a_\xi V_\xi)\cap A'=\emptyset$. By Lemma~\ref{stable}, for each $\xi\in \kappa$ the ultrafilter $a_\xi v$ is $J$-remote. It follows that for every $\xi\in \kappa$ there exists $B_\xi\in v$ such that $\cl_T(a_\xi B_\xi)\cap A'=\emptyset$. 
Note that for each $\xi\in\kappa$, the open set $C_\xi=T\setminus \cl_T(a_\xi B_\xi)$ contains $A'$. By the choice of $A'$, there exists an open set $W\supseteq A'$ such that $W\setminus C_\xi\in J$ for all $\xi\in\kappa$. Notice that for every $\xi\in\kappa$ we have $C_\xi\cap S\notin a_\xi v$. Since $W\setminus C_\xi\in J$ and $a_\xi v\in \mathcal R_J(S)$, we get that $(W\setminus C_\xi)\cap S\notin a_\xi v$ for every $\xi\in\kappa$. It follows that $((W\setminus C_\xi)\cap S)\cup (C_\xi\cap S)\notin a_\xi v$ for all $\xi\in\kappa$.  Since 
$$W\cap S\subseteq ((W\setminus C_\xi)\cap S)\cup (C_\xi\cap S),$$
we get $W\cap S\notin a_\xi v$ for all $\xi\in\kappa$. Then for each $\xi\in\kappa$ there exists $V_\xi\in v$ such that $a_\xi V_\xi\subseteq S\setminus W$. Since $W$ is an open subset of $T$ that contains $A'$, we get  
$\cl_T(\bigcup_{\xi\in \kappa}a_\xi V_\xi)\cap A'=\emptyset$, as required.
\end{proof}

It is well-known that on every infinite semigroup $S$ there exists an idempotent ultrafilter $u$, i.e., $uu=u$. 
An idempotent ultrafilter $u$ on $S$ is called {\em minimal} if for every idempotent ultrafilter $v$ on $S$, either of the equalities $vu=v$ or $uv=v$ implies $u=v$.

\begin{corollary}\label{useful}
Let $J$ be a preimage-stable ideal on a semigroup $T$ equipped with a topology such that $\lambda_x$ is a closed map for each $x\in T$. Let $S$ be a subsemigroup of $T$ such that $S$ is not $J$-compact in $T$ and $|S|<\chi(J)$. Then $\mathcal R_J(S)$ is a left ideal in $\beta(S_d)$. In particular, there exists a $J$-remote minimal idempotent ultrafilter on $S$.     
\end{corollary}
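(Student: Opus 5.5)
The first claim follows from Proposition~\ref{key}. Take any $u\in\beta(S_d)$ and $v\in\mathcal R_J(S)$. Since $S\in u$ and $|S|<\chi(J)$, the ultrafilter $u$ contains an element, namely $S$ itself, of cardinality $\kappa=|S|<\chi(J)$. The hypotheses on $J$, $T$ and $S$ are exactly those of Proposition~\ref{key}, which therefore gives $uv\in\mathcal R_J(S)$. Lemma~\ref{RJ} shows that $\mathcal R_J(S)$ is nonempty, because $S$ is not $J$-compact in $T$. Hence $\beta(S_d)\,\mathcal R_J(S)\subseteq \mathcal R_J(S)$, so $\mathcal R_J(S)$ is a nonempty left ideal of $\beta(S_d)$.

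For the existence of a $J$-remote minimal idempotent, I would use standard facts about the compact right topological semigroup $\beta(S_d)$ (see \cite{HS}). Every left ideal $L$ contains a minimal left ideal $L'$. Minimal left ideals are closed: for any $p\in L'$ the set $\beta(S_d)p$ is a left ideal contained in $L'$, so it equals $L'$, and it is compact as the image of $\beta(S_d)$ under the continuous map $\rho_p$. By the Ellis--Numakura lemma, the compact right topological semigroup $L'$ contains an idempotent $e$. An idempotent lying in a minimal left ideal belongs to the smallest ideal $K(\beta(S_d))$, and idempotents of $K(\beta(S_d))$ are exactly the minimal idempotents in the order $e\le f\iff e=ef=fe$ (\cite[Theorem~1.59]{HS}).

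This order-minimality is equivalent to the notion of minimality defined above. If $v$ is idempotent and $vu=v$ or $uv=v$, then $vuv$ or $uvu$ gives an idempotent below $u$ in that order; minimality of $u$ then forces $u=v$. Applying all this with $L=\mathcal R_J(S)$ gives $e\in L'\subseteq\mathcal R_J(S)$, a $J$-remote minimal idempotent ultrafilter.

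The only step that needs care is matching the paper's definition of a minimal idempotent with the standard order-theoretic one. The one-sided equalities $vu=v$ or $uv=v$ are weaker hypotheses than $v\le u$, so I would check this via the product $uvu$ (or $vuv$), which is an idempotent lying in $K$. Everything else is a direct citation.
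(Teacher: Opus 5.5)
Your first two paragraphs follow the paper's argument. $\mathcal R_J(S)\neq\emptyset$ by Lemma~\ref{RJ}. The left-ideal property comes from Proposition~\ref{key}, with $S\in u$ as the set of size $<\chi(J)$. The rest is the standard structure theory of the compact right topological semigroup $\beta(S_d)$, which the paper simply cites from Hindman--Strauss: a minimal left ideal inside $\mathcal R_J(S)$, an idempotent in it, and such idempotents are minimal. That part is fine.

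The third paragraph, however, contains a false step. Suppose $v$ is an idempotent with $vu=v$. Then $uvu=uv$ is an idempotent with $uv\le u$, so minimality of $u$ gives $uv=u$, not $u=v$. Together with $vu=v$, this only says that $u$ and $v$ lie in the same minimal left ideal. Symmetrically, $uv=v$ only gives $vu=u$.

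Nothing forces $u=v$. Let $L=\beta(S_d)u$ be the minimal left ideal of $u$ and let $v=wu\in L$ be any idempotent. Then $vu=wuu=v$, and minimal left ideals typically contain many idempotents (e.g.\ for $S=\mathbb N$). So the one-sided ``either of the equalities'' condition is not equivalent to order-minimality. Read literally, no idempotent of $K(\beta(S_d))$ satisfies it in such cases. The paper's definition must therefore be understood in the standard sense: $vu=uv=v$ implies $v=u$.

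With that reading, you should drop your last paragraph and replace it with this remark. The rest of your proof then coincides with the paper's.
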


\begin{proof}
Since $S$ is not $J$-compact in $G$, Lemma~\ref{RJ} implies $\mathcal R_J(S)\neq \emptyset$. Since $|S|<\chi(J)$, Proposition~\ref{key} implies that $\mathcal R_J(S)$ is a left ideal in $\beta(S_d)$. By \cite[Corollary 2.6]{HS}, $\mathcal R_J(S)$ contains a minimal (in $\beta(S_d)$) ideal. Theorems 2.8 and 2.9 from \cite{HS} imply that $\mathcal R_J(S)$ contains a minimal idempotent. 
\end{proof}

The following result establishes a lower bound for the cardinal $\chi(J)$, assuming the ideal $J$ is exhaustive.

\begin{proposition}\label{card}
Let $J$ be an exhaustive ideal on an infinite space $X$. Then $\chi(J)\geq \aleph_1$.    
\end{proposition}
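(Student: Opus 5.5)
The plan is to show directly that no cardinal $\kappa\le\aleph_0$ satisfies condition ($*$) of Definition~\ref{defb}. If no cardinal satisfies ($*$), then $\chi(J)=|X|^+\ge\aleph_1$ because $X$ is infinite. Otherwise $\chi(J)$ is the least cardinal satisfying ($*$), and that least cardinal is then uncountable. So in both cases it suffices to prove that ($*$) fails for every countable $\kappa$.

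Fix a family $\{A_i:i\in\N\}\subseteq J\cap o(X)$ witnessing that $J$ is exhaustive: the $A_i$ are open, increasing, and $\bigcup_{i}A_i=X$. Also put $A_0=\emptyset$, which lies in $J$ because $J$ is nonempty and closed under subsets. Let $\kappa\le\aleph_0$, let $A\in J$ be arbitrary, and take $B=A$. I will show that for \emph{every} family $\{U_\alpha:\alpha\in\kappa\}$ of open sets containing $B$ there is an open $W\supseteq B$ with $W\setminus U_\alpha\in J$ for all $\alpha\in\kappa$. This refutes ($*$) for $\kappa$.

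For $\kappa=0$, take $W=X$; the requirement on $W$ is then vacuous. For $0<\kappa\le\aleph_0$, re-index the family as a sequence $(U_n)_{n\in\N}$, repeating sets if $\kappa$ is finite. Define
$$W=\bigcup_{k\in\N}\Bigl(A_k\cap\bigcap_{i\le k}U_i\Bigr).$$
Each term is a finite intersection of open sets, so $W$ is open. To see $B\subseteq W$, take $x\in B$. Then $x\in A_k$ for some $k$ since the $A_k$ cover $X$, and $x\in U_i$ for all $i$ since every $U_i$ contains $B$; hence $x$ lies in the $k$-th term. Now fix $n$. Every term with $k\ge n$ is contained in $U_n$. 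Every term with $k<n$ is contained in $A_k\subseteq A_{n-1}$. Therefore $W\setminus U_n\subseteq A_{n-1}\in J$, and so $W\setminus U_n\in J$ by downward closure.

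The only step requiring an idea is the choice of $W$. The $A_k$ are used as a countable open exhaustion, so that each $U_n$ is imposed only from stage $n$ onward. Whatever part of $W$ lies outside $U_n$ is then trapped in the single ideal set $A_{n-1}$. The rest of the argument is routine bookkeeping.
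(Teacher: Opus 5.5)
Your proof is correct and is essentially the paper's argument: the paper also forms $V=\bigcup_{n}\bigl(A_n\cap\bigcap_{i\le n}U_i\bigr)$ from the exhaustion and notes that $V\setminus U_n\subseteq\bigcup_{i<n}A_i\in J$. Your explicit handling of finite $\kappa$ (including $\kappa=0$) and your choice $B=A$ in refuting ($*$) only make the same argument more careful.
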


\begin{proof}
If $\chi(J)=|X|^+$, then obviously $\chi(J)\geq \aleph_1$, as $X$ is infinite.

Assume that there exists a minimal cardinal $\kappa$ which satisfies condition ($*$) from Definition~\ref{defb}. We are going to show that $\kappa\geq \aleph_1$.
Since the ideal $J$ is exhaustive, there exists an ascending family $\{A_n:n\in\N\}\subseteq J\cap o(X)$ such that $\bigcup_{n\in\N}A_n=X$. Consider any $B\in J$ and a countable family $\{U_n: n\in\N\}\subseteq o(X)$ such that $B\subseteq U_n$ for all $n\in\N$. 
For every $n\in\N$ let $V_n=\bigcap_{i\leq n}U_i\cap A_n$. It is clear that the set $V_n$ is open and  contains $B\cap A_n$. Put $V=\bigcup_{n\in\N}V_n$. It is clear that $V$ is an open set that contains $B$. Moreover, for every $n \in\N$ we have 
$$V\setminus U_n\subseteq \bigcup_{i<n}V_i\subseteq \bigcup_{i<n}A_i\in J.$$ 
Hence $\kappa=\chi(J)\geq \aleph_1$.  
\end{proof}

To keep this paper self-contained we provide a proof of the following folklore result.

\begin{lemma}[Folklore]\label{folk}
Let $G$ be a first-countable topological group, $\{U_n:n\in\N\}$ be an open neighborhood base at $1_G$, and $A$ be a compact subset of $G$. Then for each open set $W\supseteq A$ there exists $n\in\N$  such that $AU_n\subseteq W$. In particular, the countable family $\{AU_n: n\in\N\}$ is an outer open base at the set $A$. 
\end{lemma}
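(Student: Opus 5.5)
The plan is a compactness argument by contradiction combined with continuity of multiplication. Fix an open set $W\supseteq A$ and suppose that $AU_n\not\subseteq W$ for every $n\in\N$. We may assume the base is decreasing, $U_{n+1}\subseteq U_n$; otherwise replace $U_n$ by $U_1\cap\cdots\cap U_n$, which is still a neighborhood base at $1_G$, and note that $AU_n$ only shrinks under this replacement, so the conclusion for the new base gives it for the old one. For each $n$ choose $a_n\in A$ and $g_n\in U_n$ with $a_ng_n\notin W$.

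Next we pass to a limit point in $A$. Compactness of $A$ gives a cluster point $a\in A$ of the sequence $(a_n)$. Rather than extracting a convergent subsequence, which would need sequential compactness, we argue directly with neighborhoods. Since $a\in A\subseteq W$, $W$ is open, and multiplication is jointly continuous at $(a,1_G)$, there are an open neighborhood $O$ of $a$ and an index $m$ with $OU_m\subseteq W$. Because $a$ is a cluster point, $a_n\in O$ for infinitely many $n$, so we can pick such an $n\geq m$. Then $g_n\in U_n\subseteq U_m$, hence $a_ng_n\in OU_m\subseteq W$, which is a contradiction.

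For the ``in particular'' part, each set $AU_n=\bigcup_{a\in A}aU_n$ is open because left shifts in a topological group are homeomorphisms. Each contains $A$ because $1_G\in U_n$. By the first part, every open $W\supseteq A$ contains some $AU_n$, so $\{AU_n:n\in\N\}$ is an outer open base at $A$.

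The only point needing care is the step where joint continuity is used to choose $O$ and $m$ simultaneously. This is where a genuine topological group is needed, as opposed to the left topological groups appearing elsewhere in the paper. The monotonicity reduction for the base is what lets a single index $m$ control all later $g_n$. Beyond that, the argument is routine.
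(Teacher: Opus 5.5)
Your core argument is correct: choose $a_n\in A$ and $g_n\in U_n$ with $a_ng_n\notin W$, take a cluster point $a\in A$ of $(a_n)$, and use joint continuity at $(a,1_G)$ to get $OU_m\subseteq W$. The one flaw is that your justification of the reduction to a decreasing base runs the wrong way. With $U_n'=U_1\cap\cdots\cap U_n$ you have $AU_n'\subseteq AU_n$, so $AU_n'\subseteq W$ tells you nothing about $AU_n$. The fact that the sets shrink is exactly why the conclusion for the new base is \emph{weaker} than the one you want. The correct transfer uses that the original family is a base. Each $U_n'$ is an open neighborhood of $1_G$, so it contains some $U_k$, and then $AU_k\subseteq AU_n'\subseteq W$. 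Your argument genuinely needs monotonicity, so that $g_n\in U_n\subseteq U_m$ for $n\geq m$. You should therefore replace your sentence with this one.

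Your route also differs from the paper's, which argues directly rather than by contradiction.
\begin{itemize}
\item For each $a\in A$ it picks $n(a)$ with $aU_{n(a)}U_{n(a)}\subseteq W$.
\item It covers $A$ by the sets $aU_{n(a)}$ and extracts a finite subcover indexed by $F\subseteq A$.
\item It takes $m$ with $U_m\subseteq\bigcap_{f\in F}U_{n(f)}$.
\item Writing $a=fx$ with $x\in U_{n(f)}$ then gives $aU_m\subseteq fU_{n(f)}U_{n(f)}\subseteq W$.
\end{itemize}
The doubled neighborhood $U_{n(a)}U_{n(a)}$ does the job of your joint continuity at $(a,1_G)$: it absorbs the passage from the center $f$ to nearby points $a$. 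That proof needs no monotone base, no sequences and no contradiction. With arbitrary neighborhoods of $1_G$ in place of the $U_n$, it shows in any topological group, first-countable or not, that some neighborhood $V$ of $1_G$ satisfies $AV\subseteq W$.

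Your argument, in turn, uses only that the sequence $(a_n)$ has a cluster point in $A$, i.e.\ countable compactness of $A$. The paper instead uses a finite subcover of a possibly uncountable cover. In metrizable groups, such as the locally compact Polish groups where the lemma is later applied, this extra generality is immaterial.
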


\begin{proof}
Since the set $W\supseteq A$ is open and $G$ is a topological group, for each $a\in A$ there exists $U_{n(a)}$ such that $aU_{n(a)}U_{n(a)}\subseteq W$. Put $V_a=aU_{n(a)}$ for every $a\in A$. Since the set $A$ is compact and $\{V_a:a\in A\}$ is an open cover of $A$, there exists a finite subset $F$ of $A$ such that $A\subseteq \bigcup_{f\in F}V_f$. Let $m\in\N$ be such that $U_m\subseteq\bigcap_{f\in F}U_{n(f)}$. In order to show that $AU_m\subseteq W$, fix any $a\in A$ and $y\in U_m$. There exists $f\in F$ such that $a\in V_f$. It follows that $a=fx$ for some $x\in U_{n(f)}$. Then $$ay=fxy\in fU_{n(f)}U_m\subseteq fU_{n(f)}U_{n(f)}\subseteq W.$$  Hence $AU_m\subseteq W$, as required.   
\end{proof}

A family $\mathcal A\subseteq \N^\N$ is called {\em unbounded} if for each $f\in \N^\N$ there exists $g\in\mathcal A$ such that $|\{n\in\N: f(n)<g(n)\}|=\aleph_0$. The minimal cardinality of an unbounded family in $\N^\N$ is denoted by $\mathfrak b$. It is known that $\aleph_1\leq \mathfrak b\leq 2^{\aleph_0}$, see~\cite{Blass}. The following result establishes another lower bound for the cardinal $\chi(J)$ under certain natural restrictions. 

\begin{proposition}\label{b}
Let $J$ be an ideal on a locally compact non-discrete Polish topological group $G$ with the following properties:
\begin{enumerate}[\rm(i)]
\item $J$ has a base consisting of $\sigma$-compact sets;
\item there exists a family $\{V_n:n\in\N\}\subseteq J\cap o(G)$ such that $\bigcup_{n\in\N}V_n=G$, and for each compact subset $K$ of $G$ there exists $n\in\N$ satisfying $K\subseteq V_n$.
\end{enumerate}
Then $\chi(J)\geq \mathfrak b$.    
\end{proposition}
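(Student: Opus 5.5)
We must show that no cardinal $\kappa<\mathfrak b$ satisfies ($*$). Equivalently, fix $\kappa<\mathfrak b$ and an arbitrary $A\in J$. We will produce some $B\in J$ with $A\subseteq B$ such that for every family $\{U_\alpha:\alpha\in\kappa\}$ of open sets containing $B$ there is an open $W\supseteq B$ with $W\setminus U_\alpha\in J$ for all $\alpha$. If $\chi(J)=|G|^+$ there is nothing to prove, since $|G|^+>2^{\aleph_0}\ge\mathfrak b$. So the whole task is to refute ($*$) for $\kappa<\mathfrak b$.

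\emph{Choosing $B$.} By (i), enlarge $A$ to a $\sigma$-compact $B\in J$ and write $B=\bigcup_{m\in\N}K_m$ with each $K_m$ compact. Since $G$ is locally compact Polish, it is first countable, so we may fix a decreasing neighbourhood base $\{O_n:n\in\N\}$ at $1_G$. Fix also the sets $V_n$ from (ii); replacing them by finite unions we may assume they increase. We are allowed to do this because $J$ is closed under finite unions and every compact set lies in some $V_n$, so the property in (ii) survives.

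\emph{Coding each $U_\alpha$ by a function.} Let $U_\alpha\supseteq B$ be open. For each $m$, Lemma~\ref{folk} applied to the compact set $K_m$ gives $f_\alpha(m)\in\N$ with $K_mO_{f_\alpha(m)}\subseteq U_\alpha$. Since $\kappa<\mathfrak b$, the family $\{f_\alpha:\alpha\in\kappa\}$ is bounded: there is $g\in\N^\N$ with $f_\alpha(m)\le g(m)$ for all but finitely many $m$, for each $\alpha$. By increasing $g$ we may assume $g(m)\ge m$.

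\emph{Building $W$.} Put
\[
W=\bigcup_{m\in\N}\bigl(K_mO_{g(m)}\cap V_m\bigr)\cup\ldots
\]
Intersecting with $V_m$ alone does not yet cover $B$, so instead we take
\[
W=\bigcup_{m}K_mO_{g(m)}.
\]
This $W$ is open and contains $B$. Now fix $\alpha$ and let $N$ be such that $f_\alpha(m)\le g(m)$ for all $m\ge N$. Then
\[
W\setminus U_\alpha\subseteq\bigcup_{m<N}K_mO_{g(m)}.
\]
We need this finite union to lie in $J$. To arrange this, replace $O_n$ by a base of neighbourhoods with compact closure, which exists by local compactness. Then each $\overline{K_mO_{g(m)}}\subseteq K_m\overline{O_1}$ is compact. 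By (ii), a compact set lies in some $V_n\in J$, so it belongs to $J$. Hence $W\setminus U_\alpha\in J$, and ($*$) fails for $\kappa$.

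The main obstacle is ensuring $W\setminus U_\alpha$ lands in $J$, which requires compactness of the finitely many exceptional pieces. This is handled by the relatively compact neighbourhood base together with (ii). The remaining points are routine. Non-discreteness seems to matter only for nontriviality, not for this argument. The monotonicity $f_\alpha\le^* g$ in the index of the decreasing base gives $O_{g(m)}\subseteq O_{f_\alpha(m)}$, and this is what makes $K_mO_{g(m)}\subseteq U_\alpha$ hold for $m\ge N$.
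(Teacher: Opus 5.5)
Your proof is correct and is essentially the paper's argument: enlarge $A$ to a $\sigma$-compact $B=\bigcup_m K_m\in J$, code each open $U_\alpha\supseteq B$ by a function via Lemma~\ref{folk}, bound these functions by a single $g$ using $\kappa<\mathfrak b$, and take $W=\bigcup_m K_mO_{g(m)}$. The only local difference is how the finitely many exceptional pieces are placed in $J$: you choose the base $\{O_n\}$ with compact closures (this should be fixed before the $f_\alpha$ are defined) and apply (ii) to $K_m\overline{O_1}$, whereas the paper intersects each neighbourhood of $B_i$ with an open $V_{t_i}\supseteq B_i$ from (ii), which avoids local compactness. One correction: your closing remark is wrong, because the case $\chi(J)=|G|^+$ needs $|G|=2^{\aleph_0}$, and that is exactly where non-discreteness enters (a countable discrete group gives only $|G|^+=\aleph_1$, possibly below $\mathfrak b$); state this justification explicitly and delete the abandoned first formula for $W$.
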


\begin{proof}
Since $G$ is a non-discrete Polish topological group, we have $|G|= 2^{\aleph_0}$. Thus if there exists no cardinal $\kappa$ which satisfies condition ($*$) from Definition~\ref{defb} with respect to the ideal $J$, then $\chi(J)=|G|^+> 2^{\aleph_0}\geq \mathfrak b$. 

Assume that there exists a minimal cardinal $\kappa$ which satisfies condition ($*$) from Definition~\ref{defb} with respect to $J$. That is there exists $A\in J$ such that for each $C\in J$ with $A\subseteq C$ there exists a family $\mathcal Q_C$ of size $\kappa$ consisting of open sets which contain $C$ such that for every open set $W\supseteq C$ there exists $Q\in\mathcal Q_C$ with $W\setminus Q\notin J$. Since $J$ has a base consisting of $\sigma$-compact sets, there exists a $\sigma$-compact set $B\in J$ which contains $A$. Let $B=\bigcup_{i\in\N}B_i$ where for each $i\in\N$ the set $B_i$ is compact. 
Also, for every $i\in\N$ let $t_i=\min\{n\in\N: B_i\subseteq V_n\}$, which exists by the assumption. 
Lemma~\ref{folk} implies that for each $i\in\N$ we can fix a family $\{U_n^i:n\in\N\}\subseteq o(G)$ such that $B_i\subseteq U_{n+1}^i\subseteq U_n^i$ for all $n\in\N$ and for each open $O\supseteq B_i$ there exists a minimal $n(O,i)\in\N$ such that $U_{n(O,i)}^i\subseteq O$. Substituting $U_n^i$ with $U_n^i\cap V_{t_i}$ if necessary, we additionally assume that $U_n^i\subseteq V_{t_i}$ for all $i,n\in\N$.

To derive a contradiction, assume that $\kappa<\mathfrak b$. Each $Q\in\mathcal Q_B$ defines a function $f_Q\in\N^\N$ as follows: $f_Q(i)=n(Q,i)$. Since $|\mathcal Q_B|<\mathfrak b$, there exists a function $g\in\N^\N$ such that the set $\{i\in\N :f_Q(i)>g(i)\}$ is finite for every $Q\in\mathcal Q_B$. Consider the set $W=\bigcup_{i\in\N}U^i_{g(i)}$. Since $B_i\subseteq U^i_{g(i)}$ for all $i\in\N$, we get  $B\subseteq W$. It is clear that $W$ is open. Fix any $Q\in\mathcal Q_B$. By the choice of $g$, there exists $k\in\N$ such that for each $i\geq k$ we have $g(i)\geq f_Q(i)=n(Q,i)$. It follows that $U^i_{g(i)}\subseteq U^{i}_{n(Q,i)}\subseteq Q$ for each $i\geq k$.
  Hence $$W\setminus Q\subseteq \bigcup_{i<k}U^i_{g(i)}\subseteq \bigcup_{i<k}V_{t_i}\in J.$$
 It follows that $W\setminus Q\in J$ for all $Q\in \mathcal Q_B$, which contradicts the choice of $\mathcal Q_B$. The obtained contradiction yields $\chi(J)\geq \mathfrak b$.
\end{proof}

Recall that a family $\mathcal A$ of subsets of a space $X$ is called {\em locally finite} if each $x\in X$ has an open neighborhood which intersects only finitely many elements of $\mathcal A$.

\medskip

We are in a position to prove Theorem~\ref{main}, which we restate for the reader's convenience. 

\maintheorem*

\begin{proof}
By Corollary~\ref{useful}, there exists a $J$-remote idempotent ultrafilter $u$ on $S$. It follows that there exists $U\in u$ such that $\cl_G(U)\cap C=\emptyset$. Put $P=G\setminus \cl_G(U)$. It is clear that $P$ is an open set which contains $C$. Fix a finite coloring $c: S\setminus P\rightarrow \{1,\ldots, n\}$. Since $S\setminus P\in u$ and $u$ is an ultrafilter, there exists $m\leq n$ such that $c^{-1}(m)\in u$. Then $D=U\cap c^{-1}(m)\in u$.  Fix an ascending family $\{O_i:i\in\N\}\subseteq J$ of open subsets of $G$ such that $\bigcup_{i\in\N}O_i=G$, which exists as the ideal $J$ is exhaustive. Since $u$ is an idempotent ultrafilter, there exist $x(1)\in D$ and  $U_{x(1)}\in u$ such that $x(1)U_{x(1)}\subseteq D$. Substituting $U_{x(1)}$ with $U_{x(1)}\cap D$, we can assume that $U_{x(1)}\subseteq D$.  
 Since the ultrafilter $u$ is $J$-remote, 
 $U_{x(1)}\setminus O_{1}\in u$. Since $u$ is an idempotent ultrafilter, there exist $$x(2)\in U_{x(1)}\setminus O_{1} \quad\hbox{ and }\quad U_{x(2)}\in u$$ such that 
 $$U_{x(2)}\subseteq U_{x(1)}\setminus O_{1}\quad \hbox{ and } \quad x(2)U_{x(2)}\subseteq U_{x(1)}\setminus O_{1}.$$  
 Since $u$ is a $J$-remote idempotent ultrafilter there exist $$x(3)\in U_{x(2)}\setminus O_{2}\quad \hbox{ and }\quad U_{x(3)}\in u$$ such that $$U_{x(3)}\subseteq U_{x(2)}\setminus O_2 \quad \hbox{ and } \quad x(3)U_{x(3)}\subseteq U_{x(2)}\setminus O_{2}.$$ 
 Proceeding this way we construct a decreasing sequence $\{U_{x(n)}:n\in\N\}\subseteq u$ and a sequence $\{x(n): n\in\N\}\subseteq D$ such that $$x(n)\in U_{x(n-1)}\setminus O_{n-1}, \quad U_{x(n)}\subseteq U_{x(n-1)}\setminus O_{n-1}\quad \hbox{ and }\quad x(n)U_{x(n)}\subseteq U_{x(n-1)}\setminus O_{n-1}$$for each $n\in\N$, where we agree to assume that $U_{x(0)}=D$ and $O_0=\emptyset$.
 
Put $A_0=\{x(n): n\in\N\}$.  
Then for $m>n$ we have $$x(n)x(m)\in x(n)U_{x(m-1)}\subseteq x(n)U_{x(n)}\subseteq U_{x(n-1)}.$$
Keeping in mind the formula above, it is routine to check that for each finite increasing  sequence of natural numbers $\{n_i: i\leq k\}$ we have $x(n_1)x(n_2)\cdots x(n_k)\subseteq U_{x(n_1-1)}$.
It follows that $\FS(A_0)\subseteq D$. By the construction, for each $n\in\N$ we have $O_n\cap A_0\subseteq \{x(0),\ldots, x(n)\}$, witnessing that the family $\{\{x(n)\}: n\in\N\}$ is locally finite in $G$. Since each singleton is closed in $G$, \cite[Theorem 1.1.11]{Eng} implies 
$$\cl_G(A_0)=\cl_G(\bigcup_{n\in\N}\{x(n)\})=\bigcup_{n\in\N}\cl_G(\{x(n)\})=\bigcup_{n\in\N}\{x(n)\}=A_0.$$
Hence $A_0$ is a closed subset of $G$. Since the space $G$ is $T_1$, the aforementioned inclusion $O_n\cap A_0\subseteq \{x(0),\ldots, x(n)\}$ implies that the set $A_0$ is discrete. 

Next we are going to construct a family $\mathcal A$ of sequences in $S\setminus P$ which satisfies conditions (i) - (iv).
If $\FS(A_0)\notin J$, then the family $\mathcal A=\{A_0\}$ satisfies conditions (i)~-~(iv). Otherwise, assume that for some $\xi\in|D|^+$ we have already constructed a family $\mathcal A_\xi=\{A_\alpha: \alpha\in\xi\}$ consisting of closed discrete sequences in $D$ such that $\bigcup_{A\in\mathcal A_\xi}\FS(A)\subseteq D$ and $\FS(A)\cap \FS(B)=\emptyset$ for all distinct $A,B\in\mathcal A_\xi$. If $\bigcup_{\alpha\in\xi} \FS(A_\alpha)\in u$, then $\bigcup_{\alpha\in\xi} \FS(A_\alpha)\notin J$ and thus $\mathcal A_\xi$ satisfies conditions (i) - (iv). Otherwise, as the ultrafilter $u$ is $J$-remote, we have $D'=D\setminus \bigcup_{\alpha\in\xi} \FS(A_\alpha)\in u$ and repeating the above arguments we can find a closed discrete sequence $A_\xi\subseteq D'$ such that $\FS(A_\xi)\subseteq D'$. Note that $\mathcal A_{\xi+1}=\{A_\alpha: \alpha\in\xi+1\}$ satisfies conditions (i) - (iv) whenever $\bigcup_{\alpha\in\xi+1} \FS(A_\alpha)\in u$. It is clear that the recursion will reach an ordinal $\mu\in|D|^+$ such that $\bigcup_{\alpha\in\mu} \FS(A_\alpha)\in u$. Then the family $\mathcal A_\mu$ satisfies conditions (i) - (iv). 

Assume that $J$ is nicely exhaustive with respect to $S$. In this case we can assume that the family $\{O_i:i\in\N\}\subseteq J$ fixed at the beginning of the proof satisfies the following additional property: $S(S\setminus O_i)\subseteq S\setminus O_i$ for each $i\in\N$. Under this assumption, the constructed above sequence $A_0=\{x(n): n\in\N\}$ has the following additional property: for each finite increasing sequence $i_1 < i_2 <\ldots< i_n$ of positive integers the product $x(i_1)x(i_2)\cdots x(i_n)$ belongs to $S\setminus O_{i_n-1}$, as $x(i_n)\in S\setminus O_{i_n-1}$. It follows that for each $i\in\N$ we have $$\FS(A_0)\cap O_i\subseteq \{x(k):k\leq i\}\cup\{x(n_1)x(n_2)\cdots x(n_k): n_1<n_2<\ldots <n_k\leq i\}.$$ Since the latter set is finite, we get that the family $\{\{x\}: x\in\FS(A_0)\}$ is locally finite in $G$. Similarly as before it can be checked that $\FS(A_0)$ is a closed discrete subset of $G$. 
From this point on, we can proceed as above to construct a family $\mathcal A_\mu$ which satisfies conditions (i) - (v).

In order to establish item (vi), assume that $J$ is not a near-maximal ideal. Then there exists $V\in u$ such that $G\setminus \cl_G(V)\in J^+$. Recall that $U$ is an element of $u$ such that $\cl_G(U)\cap C=\emptyset$. Put $W=V\cap U$. It is easy to check that $P=G\setminus \cl_G(W)\in J^+$ is an open set which contains $C$, and $S\setminus P\in u$. From this point on, we can repeat the above proof to find a family $\mathcal A$ of sequences in $S\setminus P$ which satisfies conditions (i) - (iv).
\end{proof}

Condition (vi) in Theorem~\ref{main} excludes the case where $J$ is a near-maximal ideal. We suspect that no left-invariant exhaustive ideal on a $T_1$ left topological group can be near-maximal, though this remains an open question.

\section{Remote points and the proof of Theorem~\ref{appl}}\label{3}


For a Tychonoff space $X$ by $\beta(X)$ we denote the Stone-\v{C}ech compactification of $X$ (see~\cite[Chapter 3.6]{Eng}). A point $p\in\beta (X)\setminus X$ is called {\em remote}, if $p$ is not in the closure of any nowhere dense subset of $X$. We slightly abuse notation by saying that a space $X$ {\em has a remote point} if $X$ is Tychonoff and $\beta (X)\setminus X$ contains a remote point. 
The following result is a consequence of~\cite[Theorem~2.4]{D84}.  

\begin{theorem}[Dow]\label{dow}
    Each non-compact separable metrizable space has a remote point.
\end{theorem}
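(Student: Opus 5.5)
Dow's theorem is cited from \cite[Theorem~2.4]{D84} and is not proved in the paper. A self-contained proof would go as follows. Let $X$ be a non-compact separable metrizable space.

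\textbf{Reduction.} We may assume $X$ has no isolated points on some non-compact part, or else we handle the isolated points directly. If the set $I$ of isolated points has non-compact closure, take a closed discrete infinite set $\{d_n\}\subseteq I$. Then any free ultrafilter on $\{d_n\}$ gives a point of $\beta(X)\setminus X$. This point is remote, because a nowhere dense set contains no isolated points, and $\{d_n\}$ is clopen and C*-embedded in the metrizable space $X$, being a closed discrete set of isolated points. Otherwise, replace $X$ by a non-compact regular closed piece without isolated points. Remoteness transfers because regular closed subsets of normal spaces have closures in $\beta$ that are neighbourhoods of their remainders.

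\textbf{Main construction.} Fix a countable $\pi$-base $\{B_k\}$ and an exhausting sequence witnessing non-compactness. That is, take a locally finite sequence of pairwise disjoint nonempty open sets $W_n$ with no convergent selection, so that their union is C*-embedded in a suitable sense. Following van Douwen and Chae--Smith, the plan is to build a filter $\mathcal F$ of open sets such that for every nowhere dense $N$ some $F\in\mathcal F$ has $\overline F\cap N=\emptyset$. Every $p$ in $\bigcap_{F\in\mathcal F}\cl_{\beta X}F\setminus X$ is then remote: by normality of $X$, disjoint closed sets $\overline F$ and $\overline N$ have disjoint closures in $\beta X$.

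The filter is obtained by choosing, inside each $W_n$, finite families of small open sets forming a ``tree'' indexed by $\prod_n$ of finite sets. A nowhere dense $N$ is avoided by a branch on a tail: for each $n$ and each of the finitely many pieces $B_k$ with $k\le n$ meeting $W_n$, one can select open subsets missing $N$. One then takes an ultrafilter on $\N$ together with a combinatorial selection, which is where separability enters: only countably many $\pi$-base elements are involved.

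\textbf{Main obstacle.} The hardest part is ensuring that the filter is centered while defeating all $2^{\aleph_0}$ nowhere dense sets at once. A naive diagonalization does not work, since there are continuum many nowhere dense sets. Dow's trick is to use the countable $\pi$-base so that each nowhere dense $N$ is coded by a function $\N\to\N$, where $f_N(n)$ specifies how to avoid $N$ inside $W_n$ at level $n$. One then uses finite branching to take products: the sets $F_N=\bigcup_n(\text{union of chosen pieces in }W_n\text{ avoiding }N)$, for $N$ ranging over finite unions, still have nonempty intersection on a tail. The reason is that in each $W_n$ the finitely many nowhere dense sets can be simultaneously avoided by some nonempty open set, since a finite union of nowhere dense sets is nowhere dense. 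The finite intersection property therefore holds, and the remote point exists.
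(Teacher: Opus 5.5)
The paper gives no proof of this statement; it cites Dow [D84, Theorem 2.4]. The result also follows from the van Douwen and Chae--Smith theorem on non-pseudocompact spaces of countable $\pi$-weight, so a citation is all that is needed. Several of your steps are sound:
\begin{itemize}
\item The isolated-point case is correct.
\item Passing to a regular closed crowded piece $Y$ is correct. The right justification is that $\cl_{\beta X}Y\cup\cl_{\beta X}(X\setminus\operatorname{int}Y)=\beta X$, and a remote point of $Y$ avoids $\cl_{\beta X}(Y\setminus\operatorname{int}Y)$, which is the closure of a set nowhere dense in $Y$. It is not true that $\cl_{\beta X}Y$ is a neighbourhood of its whole remainder.
\item The last step is correct: once every nowhere dense $N$ has some $F\in\F$ with $\cl F\cap\cl N=\emptyset$, points of $\bigcap_{F\in\F}\cl_{\beta X}F\setminus X$ are remote.
\end{itemize}

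The genuine gap is the finite intersection property, which is the whole content of the theorem. You must show that $F_{N_1}\cap\dots\cap F_{N_k}$ meets every tail $\bigcup_{n\ge m}W_n$. Knowing that $N_1\cup\dots\cup N_k$ is nowhere dense only gives \emph{some} nonempty open subset of $W_n$ missing all the $N_i$. Nothing places that set inside the pieces chosen for each $N_i$ separately, and those pieces may be pairwise disjoint in every $W_n$.

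Your inference would hold only if $N\mapsto F_N$ were anti-monotone, that is, $N\subseteq M\Rightarrow F_M\subseteq F_N$. Arranging this while keeping $\cl F_N\cap N=\emptyset$ is exactly the difficulty. The two natural attempts both fail:
\begin{itemize}
\item Taking, in each $W_n$, all $\pi$-base elements whose closures miss $\cl N$ is anti-monotone. But their union is dense in $W_n$, so its closure meets $N$.
\item Restricting to candidate pieces fixed in advance controls closures. This covers indices $\le n$, indices $\le g(n)$ for a fixed $g$, or the nodes of any finitely branching tree chosen beforehand. But a set $N$ containing one point of each listed piece is closed discrete, hence nowhere dense in your crowded $Y$, and then $F_N\cap W_n=\emptyset$ for every $n$.
\end{itemize}

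Equivalently, because $N$ behaves independently on different $W_n$, your codes $f_N$ can exceed any prescribed function at every coordinate. So the sets $\{n: f_N(n)\le g(n)\}$ can be empty, and no ultrafilter on $\mathbb N$ lets you guess a single bound for all $N$.

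Since the tree, the ultrafilter and the ``combinatorial selection'' are never specified, the sketch lacks the idea that handles continuum many nowhere dense sets at once. As written it is not a proof. Either cite the result as the paper does, or supply the actual van Douwen, Chae--Smith or Dow construction.
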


Recall that a subset $A$ of a space $X$ is called a {\em zero set} if $A=f^{-1}(0)$ for some continuous function $f: X\rightarrow [0,1]$. A subset $A\subseteq X$ is called {\em cozero} if $X\setminus A$ is a zero set.
The existence of remote points in $\beta(X)\setminus X$ can be characterized using the existence of certain filters on the space $X$, defined below.

\begin{definition}\rm
Let $R$ be one of the following four properties of subsets of a space $X$: open, closed, zero, cozero.
 A filter $\F$ on $X$ is called 
 \begin{enumerate}[\rm(i)]
\item an {\em $R$ filter} if $\F$ possesses a base consisting of sets with property $R$;
\item an {\em $R$ ultrafilter} if $\F$ is an $R$ filter, and for any subset $A\subseteq X$ with property $R$ either $A\in \F$ or $X\setminus A\in \F$;
\item a {\em closed-open-zero-cozero ultrafilter} if $\F$ is simultaneously a closed ultrafilter, an open ultrafilter, a zero ultrafilter, and a cozero ultrafilter.
\end{enumerate}
\end{definition}

The following result was proven in~\cite[Theorem 1.6]{BS}.

\begin{theorem}[Bardyla, \v{S}upina]\label{old2}
For a Tychonoff space $X$ the following conditions are equivalent:
\begin{enumerate}[\rm(a)]
    \item $X$ has a remote point;
    \item There exists a closed-open-zero-cozero ultrafilter on $X$ with no accumulation points.
\end{enumerate}
\end{theorem}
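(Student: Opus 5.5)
Both implications go through the correspondence between points of $\beta(X)$ and zero ultrafilters (the $z$-ultrafilters). Each $p\in\beta(X)$ determines the zero ultrafilter $\mathcal Z_p=\{Z\subseteq X \text{ zero}: p\in\cl_{\beta(X)}(Z)\}$. Conversely, each zero ultrafilter determines a unique point of $\beta(X)$. A zero ultrafilter has an accumulation point $x\in X$ if and only if it is fixed at $x$, i.e.\ $p=x\in X$. Two standard facts will be used throughout: disjoint zero sets have disjoint closures in $\beta(X)$, and every point of $\beta(X)$ has a base of zero (or cozero) neighbourhoods.

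For (b)$\Rightarrow$(a), let $\F$ be a closed-open-zero-cozero ultrafilter with no accumulation points. Its zero members form a zero ultrafilter, because $\F$ is a zero filter and a zero ultrafilter. Let $p$ be the corresponding point of $\beta(X)$. Since $\F$ has no accumulation points, $p\notin X$. To show $p$ is remote, fix a nowhere dense $N$; we may assume $N$ is closed. Since $\F$ is an open ultrafilter, either $X\setminus N\in\F$ or $N\in\F$. The latter is impossible: $\F$ has an open base, while $N$ has empty interior and cannot contain a nonempty open set. Hence $X\setminus N\in\F$. Using the cozero base, pick a cozero $C\in\F$ with $C\subseteq X\setminus N$. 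Using the zero base, pick a zero set $Z\in\F$ with $Z\subseteq C$. Then $Z$ and $X\setminus C$ are disjoint zero sets, so their closures in $\beta(X)$ are disjoint. Since $N\subseteq X\setminus C$ and $p\in \cl_{\beta(X)}Z$, it follows that $p\notin\cl_{\beta(X)}N$. This was the step I expected to be delicate: merely disjoint closed sets need not be completely separated, and interposing the cozero set $C$ is what gives the complete separation.

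For (a)$\Rightarrow$(b), let $p$ be remote and let $\F$ be the filter generated by $\mathcal Z_p$. It has no accumulation points because $p\notin X$. The key tool is the following observation. For any closed $A\subseteq X$, the boundary $\partial A$ is closed and nowhere dense, so $p\notin\cl_{\beta(X)}\partial A$. Choose a cozero neighbourhood $V$ of $p$ in $\beta(X)$ and a zero neighbourhood $Z^*\subseteq V$ of $p$ such that $\cl V\cap\cl_{\beta(X)}\partial A=\emptyset$, and put $Z=Z^*\cap X\in\mathcal Z_p$. Then $V\cap X$ is the disjoint union of the open sets $V\cap\operatorname{int}A$ and $V\setminus A$. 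Consequently $Z\cap A$ and $Z\setminus A$ are complementary clopen pieces of $Z$. Each piece is a zero set of $X$: take the function defining $Z$ and redefine it on $V\cap X$ to be the defining function on one piece and a positive constant on the other, which is continuous because the pieces are relatively open in $V\cap X$. Since $\mathcal Z_p$ is a zero ultrafilter, exactly one of $Z\cap A$ and $Z\setminus A$ lies in $\mathcal Z_p$.

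From this observation the required properties follow. If $Z\cap A\in\mathcal Z_p$, then $A\in\F$. Otherwise $Z\setminus A\in\F$, so $X\setminus A\in\F$. This shows $\F$ is a closed ultrafilter, and by passing to complements also an open ultrafilter; the zero and cozero ultrafilter properties are the special cases where $A$ is a zero set or its complement is cozero. For the bases: applying the observation to any $Z_0\in\mathcal Z_p$, the piece lying in $\mathcal Z_p$ must be $Z\cap Z_0$. That piece lies in $\operatorname{int}Z_0$, so $\operatorname{int}Z_0\in\F$, which gives an open base. Moreover, the cozero set $V\cap\operatorname{int}Z_0$ contains $Z\cap Z_0$, so it lies in $\F$ and is contained in $Z_0$, which gives a cozero base. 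Zero sets are closed, so a closed base is automatic. The main technical point in this direction is the verification that the two clopen pieces of $Z$ are genuinely zero sets of $X$, which is where the complete regularity of $X$ and the choice of $V$ are used.
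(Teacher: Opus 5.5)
The paper does not prove this statement; it quotes it from \cite[Theorem~1.6]{BS}. Your argument is therefore a self-contained alternative rather than a variant of anything in the text. Your route through the correspondence between points of $\beta(X)$ and $z$-ultrafilters is a good one. The implication (b)$\Rightarrow$(a) is correct as written. In (a)$\Rightarrow$(b), the single observation that remoteness makes every closed $A$ ``clopen near $p$'' (via a cozero $V\ni p$ with $\cl_{\beta(X)}V\cap\cl_{\beta(X)}\partial A=\emptyset$) does deliver all four dichotomies and the open and cozero bases at once.

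One step is justified incorrectly. You redefine the defining function $h$ of $Z$ on $V\cap X$ and keep $h$ outside $V$. Relative openness of the two pieces gives continuity only at points of $V\cap X$. At a point $x\in\cl_X(V\setminus A)\setminus V$, the new function has value $h(x)$ but equals your constant at nearby points, so for an arbitrary $h$ and constant it is discontinuous.

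You can repair this by adding instead of redefining:
\begin{enumerate}[\rm(1)]
\item Write $V=\{g>0\}$ with $g\colon\beta(X)\to[0,1]$ continuous.
\item Put $\theta=g$ on $X\setminus A$ and $\theta=0$ on $A$.
\item Put $\theta'=g$ on $\operatorname{int}A$ and $\theta'=0$ elsewhere.
\end{enumerate}
Both functions are continuous on $X$, because every point of $\partial A$ has a neighbourhood missing $V$, on which $g$ vanishes. Since $Z\subseteq V$, we have $g>0$ on $Z$ and $Z\cap\partial A=\emptyset$. Hence $Z\cap A=(h+\theta)^{-1}(0)$ and $Z\setminus A=(h+\theta')^{-1}(0)$.

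The same $\theta'$, taken with $A=Z_0$, also proves your otherwise unjustified claim that $V\cap\operatorname{int}Z_0$ is a cozero set, which you need for the cozero base. Your redefinition also works if you first arrange $h\equiv1$ on $X\setminus V$ and use the constant $1$. With this fix the proof is complete.
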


The following technical lemma is useful for detecting subsets of a space $X$ that are not $\NC(X)$-compact in $X$.

\begin{lemma}\label{dense}
 Let $Q$ be an open subset of a space $Y$ such that the subspace $\cl_Y(Q)$ has a remote point. Then each dense subset of $Q$ is not $\NC(Y)$-compact in~$Y$.
\end{lemma}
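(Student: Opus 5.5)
By Lemma~\ref{RJ}, it suffices to exhibit an $\NC(Y)$-remote ultrafilter on an arbitrary dense subset $D\subseteq Q$. The plan is to build this ultrafilter from a remote point of $Z=\cl_Y(Q)$, using Theorem~\ref{old2}.

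First, apply Theorem~\ref{old2} to the Tychonoff space $Z$. This gives a closed-open-zero-cozero ultrafilter $\F$ on $Z$ with no accumulation points in $Z$. Two properties of $\F$ will be used.

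\emph{(a) $\F$ avoids closed sets with small interior.} If $N\subseteq Z$ is closed with empty interior in $Z$, then $Z\setminus N\in\F$. Indeed, $\F$ is an open ultrafilter and $Z\setminus N$ is open, so either $Z\setminus N\in\F$ or $N\in\F$. In the latter case $N$ contains some open $O\in\F$; since $O\neq\emptyset$, this contradicts $\operatorname{int}N=\emptyset$.

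\emph{(b) $\F$ escapes compact sets.} If $K\subseteq Z$ is compact, some member of $\F$ has closure disjoint from $K$. Since $\F$ has no accumulation points, each $x\in K$ has an open neighborhood $W_x$ and some $F_x\in\F$ with $W_x\cap F_x=\emptyset$. Cover $K$ by finitely many $W_x$ and intersect the corresponding $F_x$. As $\F$ is a closed filter, this intersection can be shrunk to a closed member of $\F$ that still misses a neighbourhood of $K$.

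Next, transfer $\F$ to $D$. Consider the family $\{O\cap D: O\in\F,\ O \text{ open in } Z\}$. Each such $O$ is nonempty and open in $Z$, so it meets $Q$ in a nonempty open set, since $Q$ is dense in $Z$. That set in turn meets $D$, since $D$ is dense in $Q$. Because open sets form a base of $\F$, this family is closed under finite intersections and consists of nonempty sets. Extend it to an ultrafilter $u$ on $D$.

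Now verify that $u$ is $\NC(Y)$-remote. Let $A\in\NC(Y)$, say $A\subseteq K\cup N$ with $K$ compact and $N$ nowhere dense in $Y$; we may take $N$ closed. Only $A\cap Z$ matters, because every member of $u$ has closure inside $Z$. The set $K\cap Z$ is compact, and $N\cap Z$ is closed with empty interior in $Z$. For the latter: an open subset of $Z$ contained in $N$ would meet $Q$ in a nonempty open subset of $Y$ lying inside $N$, which is impossible.

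By (a) and (b), choose $F\in\F$ that is closed in $Z$ and disjoint from $N\cap Z$. Also choose a closed $F'\in\F$ with $F'\cap K=\emptyset$. Set $F''=F\cap F'$, and pick an open $O\in\F$ with $O\subseteq F''$. Then $O\cap D\in u$, and
\[
\cl_Y(O\cap D)\subseteq \cl_Z(F'')=F'',
\]
which is disjoint from $K\cup N$, and hence from $A$. Therefore $u\in\mathcal R_{\NC(Y)}(D)$, and Lemma~\ref{RJ} yields the claim.

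The main obstacle is the compact part, i.e.\ property (b). One must make sure that "no accumulation points" gives a closed member of $\F$ disjoint from $K$, rather than just a member avoiding $K$. The closed-ultrafilter property handles this: for each $x\in K$, the closed set $Z\setminus W_x$ contains $F_x\in\F$, so it is itself in $\F$ since $\F$ is upward closed. Intersecting finitely many of these gives a closed member of $\F$ that misses an open neighborhood of $K$.
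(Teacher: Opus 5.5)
Your proof is correct and uses the same ingredients as the paper. Both take the ultrafilter $\F$ on $\cl_Y(Q)$ from Theorem~\ref{old2}, use the open-ultrafilter property to dispose of nowhere dense sets, use the absence of accumulation points (with closed members of $\F$) to dispose of compact sets, and use the open base of $\F$ plus density of $D$ to guarantee nonemptiness. The difference is only packaging: the paper builds the covering map $\phi$ directly, first for $\NC(\cl_Y(Q))$ and then lifting it to $\NC(Y)$ via $\psi$, whereas you build the dual object, an $\NC(Y)$-remote ultrafilter on $D$, invoke Lemma~\ref{RJ}, and handle $Y$ in one step because closures of subsets of $D$ stay inside $\cl_Y(Q)$.
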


\begin{proof}
 Fix a dense subset $D$ of $Q$ and let $X=\cl_Y(Q)=\cl_Y(D)$ be equipped with the subspace topology inherited from $Y$. First we are going to show that $D$ is not $\NC(X)$-compact in $X$. Since $X$ has a remote point, Theorem~\ref{old2} yields a closed-open-zero-cozero ultrafilter $\F$ on $X$ with no accumulation points. Fix any nowhere dense subset $A$ of $X$. Since $\F$ is an open ultrafilter, the open dense subset $X\setminus \cl_X(A)$ of $X$ belongs to $\F$. Since $\F$ is a closed filter, there exists a closed set $F_A\in \F$ such that $F_A\subseteq X\setminus \cl_X(A)$. It follows that for each nowhere dense set $A$ of $X$ there exists an open subset $V_A=X\setminus F_A$ such that $A\subseteq V_A$ and $X\setminus V_A\in\F$. As $\F$ is a closed ultrafilter and the space $X$ is Tychonoff, for a given compact subset $K$ of $X$ either $K\in\F$ or there exists a closed set $F_K\in\F$ such that $F_K\cap K=\emptyset$. Since $\F$ does not have accumulation points, the latter case holds true. For each compact $K\subseteq X$ put $U_K=X\setminus F_K$. By the definition of $\NC(X)$, for each $B\in\NC(X)$ there exist a nowhere dense set $A(B)\subseteq X$ and a compact set $K(B)\subseteq X$ such that $B\subseteq A(B)\cup K(B)$.  Consider the map $\phi: \NC(X)\rightarrow o(X)$ defined by $\phi(B)=V_{A(B)}\cup U_{K(B)}$. By the definition of $\phi$, for each finite subset $\mathcal A\subseteq \NC(X)$ we have that $X\setminus \bigcup_{A\in\mathcal A}\phi(A)\in \F$. Since the filter $\F$ is open, there exists an open set $F_{\mathcal A}\in\F$ such that $F_\mathcal{A}\subseteq X\setminus \bigcup_{A\in\mathcal A}\phi(A)$. Since $D$ is dense in $X$, we get that $D\cap F_\mathcal{A}\neq \emptyset$, which in turn implies that $D\setminus \bigcup_{A\in\mathcal A}\phi(A)\neq \emptyset$. Hence $D$ is not $\NC(X)$-compact in $X$.  

 In order to show that $D$ is not $\NC(Y)$-compact in $Y$ define the map $\psi:\NC(Y)\rightarrow o(Y)$ as follows: for each $A\in\NC(Y)$ let $\psi(A)=\phi(A\cap X)\cup (Y\setminus X)$. Let us check that the map $\psi$ is well defined. Since $Q$ is an open subset of $Y$, for each nowhere dense subset $A\subseteq Y$ the set $A\cap X$ is nowhere dense in $X$. Fix a subset $B\subseteq Y$ such that $\cl_Y(B)$ is compact. Since $X$ is a closed subset of $Y$, the set $\cl_Y(B)\cap X$ is compact, implying that $B\cap X\subseteq \cl_Y(B)\cap X\in \NC(X)$. Thus $A\cap X\in\NC(X)$ for each $A\in\NC(Y)$. It is easy to see that $A\subseteq \psi(A)$ for each $A\in\NC(Y)$. 
 Since $\phi(A\cap X)$ is open in $X$, there exists an open set $W$ in $Y$ such that $W\cap X=\phi(A\cap X)$. Observe that $$\psi(A)=\phi(A\cap X)\cup (Y\setminus X)=W\cup (Y\setminus X),$$ which implies that $\psi(A)$ is open for each $A\in\NC(Y)$. Hence the map $\psi$ is indeed well defined. 
 
  For each finite subset $\mathcal A\subseteq \NC(Y)$ we have $$D\setminus \bigcup_{A\in\mathcal A}\psi(A)=D\setminus \bigcup_{A\in\mathcal A}\phi(A\cap X) \neq \emptyset.$$ Hence $D$ is not $\NC(Y)$-compact in $Y$, as required.
\end{proof}

We are in a position to prove Theorem~\ref{appl}, which we restate for the reader's convenience. 

\Polishappl*

\begin{proof}
The group $G$ is non-compact, as $S$ has non-compact closure. Thus $\NC(G)$ is an ideal. Since the group $G$ is left topological, left shifts in $G$ are homeomorphisms. It follows that $\NC(G)$ is left-invariant. 

Since $G$ is Polish and non-compact, it contains a closed discrete subset $\{a_n: n\in\N\}$. Put $A_0=\{a_{2n}: n\in\N\}$ and $A_1=\{a_{2n+1}: n\in\N\}$. It is clear that $A_0$ and $A_1$ are disjoint closed and discrete. By the normality of $G$, there exist open sets $W_0, W_1$ such that $A_0\subseteq W_0$, $A_1\subseteq W_1$, and $\overline{W_0}\cap \overline{W_1}=\emptyset$. It is easy to see that neither $W_0$ nor $W_1$ belongs to $\NC(G)$. Seeking a contradiction, assume that there exists an ultrafilter $u$ on $G$ such that $\{G\setminus \overline{U}: U\in u\}\subseteq \NC(G)$. Then either $W_0\in u$ or $G\setminus W_0\in u$. In the former case, we have that $W_1\subseteq G\setminus \overline{W_0}\in \NC(G)$, which is impossible. In the latter case, we have $W_0= G\setminus (G\setminus W_0)=G\setminus \overline{G\setminus W_0}\in \NC(G)$, which is also impossible. Thus the ideal $\NC(G)$ is not near-maximal.

Since $G$ is locally compact separable metrizable, we get that $G$ is $\sigma$-compact. 
Since $G$ is locally compact and $\sigma$-compact, there exists an increasing family $\{V_n: n\in\N\}\subseteq \NC(G)\cap o(G)$ such that $\bigcup_{n\in\N}V_n=G$. 
The family $\{V_n:n\in\N\}$ witnesses that the ideal $\NC(G)$ is exhaustive. 
Proposition~\ref{card} yields $\chi(\NC(G))\geq \aleph_1$.

Let $Q$ be an open subset of $G$ such that $\cl_G(Q)$ is not compact and $S$ is dense in $Q$. Since $G$ is separable metrizable, the set $S$ contains a countable dense subset $D$. Let $S'$ be a subsemigroup of $S$ generated by the set $D$. It is clear that $|S'|=\aleph_0<\aleph_1\leq \chi(\NC(G))$. Note that the subspace $\cl_G(Q)$ of $G$ is separable metrizable.  Theorem~\ref{dow} implies that $\cl_G(Q)$ has a remote point. Since $S'$ is dense in $Q$, Lemma~\ref{dense} implies that $S'$ is not $\NC(G)$-compact in $G$.
Hence Theorem~\ref{main}, applied to $G$, $S'$, $\NC(G)$ and $C$, yields an open set $P\in\NC(G)^+$ such that $C\subseteq P$, and for each finite coloring $c'$ of $S'\setminus P$ there exists a family $\mathcal A$ of sequences in $S'$ which satisfies conditions (i) - (iv) with respect to $c'$. Moreover, if $\NC(G)$ is nicely exhaustive with respect to $S'$, then we can chose the family $\mathcal A$ to satisfy conditions (i) - (v) with respect to $c'$.  

Fix a finite coloring $c:S\setminus P\rightarrow m$. Then it induces a finite coloring $c': S'\setminus P\rightarrow m$. It is clear that the aforementioned family $\mathcal A$ corespondent to the coloring $c'$ satisfies conditions (i) - (iv) with respect to the coloring $c$. 
Assume additionally that $\NC(G)$ is nicely exhaustive with respect to $S$. Then there exists an increasing family $\{B_i:i\in\N\}\subseteq \NC(G)\cap o(G)$ such that $\bigcup_{i\in\N}B_i=G$ and $S\setminus B_i$ is a nonempty left ideal in $S$ for each $i\in\N$. Since $S'\notin \NC(G)$, we get that $S'\setminus B_i\neq \emptyset$ for all $i\in\N$. Moreover, as $S'\setminus B_i=S'\cap (S\setminus B_i)$, it is straightforward to check that $S'\setminus B_i$ is a left ideal in $S'$.  Hence the ideal $\NC(G)$ is nicely exhaustive with respect to  $S'$ as well. Thus we can choose the aforementioned family $\mathcal A$ correspondent to the coloring $c'$ to satisfy conditions (i) - (v). It is clear that the family $\mathcal A$ satisfies conditions (i) - (v) with respect to the coloring $c$ as well.
\end{proof}


\section{Applications of Theorem~\ref{main} to other ideals on topological groups}\label{4}

A subset $A$ of a topological space $X$ is called {\em meager} if there exist nowhere dense subsets $B_n$, $n\in\N$ of $X$ such that $A\subseteq \bigcup_{n\in\N}B_n$. A space $X$ is called {\em Baire} if $X$ is not meager.
For a Baire space $X$ let $\mathcal{M}(X)$ denote the ideal consisting of meager subsets of $X$ 

Observe that in Theorem~\ref{appl} we cannot substitute the ideal $\NC(G)$ by  the ideal $\mathcal M(G)$, provided that $G$ is not discrete. Indeed, let $D$ be a countable dense subset of $G$. Then $D\in\mathcal {M}(G)$ and for each open neighborhood $U$ of $D$ the set $G\setminus U$ is nowhere dense and thus $G\setminus U\in\mathcal {M}(G)$. Hence we cannot find a family $\mathcal A$ of sequences in $G\setminus U$ which satisfies condition (iv) from Theorem~\ref{appl}. The following lemma shows the core of the above arguments.

\begin{lemma}
   Let $X$ be a space that contains a dense meager subset and $J$ be an ideal on $X$ such that $\mathcal M(X)\subseteq J$. Then $X$ is $J$-compact. 
\end{lemma}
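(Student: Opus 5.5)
We prove directly from the definition that $X$ is $J$-compact. Fix a dense meager subset $D\subseteq X$ and an arbitrary map $\phi: J\rightarrow o(X)$ with $A\subseteq \phi(A)$ for all $A\in J$. The goal is to find two elements $A_1,A_2\in J$ with $X=\phi(A_1)\cup\phi(A_2)$. The covering by finitely many sets then witnesses $J$-compactness.

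The first step uses the dense meager set. Since $\mathcal M(X)\subseteq J$, we have $D\in J$, so $\phi(D)$ is an open set containing the dense set $D$; in particular $\phi(D)$ is open and dense in $X$. Its complement $X\setminus\phi(D)$ is closed with empty interior, since any nonempty open set inside it would miss $D$, contradicting density. Hence $X\setminus\phi(D)$ is nowhere dense, thus meager, and so it belongs to $J$.

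For the second step put $A_1=D$ and $A_2=X\setminus \phi(D)$, both in $J$. Then
$$X=\phi(D)\cup (X\setminus\phi(D))\subseteq \phi(A_1)\cup\phi(A_2),$$
so the finite family $F=\{A_1,A_2\}$ works, and $X$ is $J$-compact.

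There is no real obstacle here. The only point needing care is checking that the complement of an open dense set is nowhere dense, which is immediate. Note also that we never need $X$ to be Baire for this argument: we only use that the meager sets of $X$ lie in $J$, and that $J$, being an ideal, does not contain $X$, so the statement is non-vacuous.
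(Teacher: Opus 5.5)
Your proposal is correct and follows the paper's proof exactly: $\phi(D)$ is open and dense, so $E=X\setminus\phi(D)$ is nowhere dense and lies in $\mathcal M(X)\subseteq J$, and the two sets $\phi(D),\phi(E)$ cover $X$. Your side remark is also fine, since $X\notin J\supseteq\mathcal M(X)$ already forces $X$ to be non-meager.
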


\begin{proof}
Let $D$ be a dense meager subset of $X$. Fix any map $\phi: J\rightarrow o(X)$ such that $A\subseteq \phi(A)$. Note that $\phi(D)$ is an open dense subset of $X$. It follows that $E=X\setminus \phi(D)\in \mathcal M(X)\subseteq J$. Thus $X\subseteq \phi(D)\cup \phi(E)$, which implies that $X$ is $J$-compact.     
\end{proof}

On the other hand, the following weaker counterpart of Corollary~\ref{NC} holds, where $\MC(\mathbb R^n)$ denotes the ideal on $\mathbb R^n$ consisting of sets that are contained in the union of a compact subset of $\mathbb R^n$ and a meager subset of $\mathbb R^n$.

\begin{proposition}
 Let $C\in\MC(\mathbb R^n)$. Then there exists an open set $P\in\MC(\mathbb R^n)^+$ such that $C\subseteq P$, and for each finite coloring of $\mathbb R^n\setminus P$ there exists an infinite sequence $A$ in $\mathbb R^n\setminus P$ such that $\FS(A)$ is a closed discrete monochromatic set.   
\end{proposition}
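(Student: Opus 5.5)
The plan is to bypass the $J$-remote machinery altogether and make $\mathbb R^n\setminus P$ a closed discrete copy of an additive subsemigroup of $\N$ lying on a ray. Theorem~\ref{main} cannot be applied directly: by the preceding lemma $\mathbb R^n$ is $\MC(\mathbb R^n)$-compact, so Lemma~\ref{RJ} gives no $\MC(\mathbb R^n)$-remote ultrafilters. The point is that condition (iv) of the earlier results is no longer required here, so $P$ may have a very small complement.

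First, write $C\subseteq K\cup M$ with $K\subseteq\mathbb R^n$ compact and $M\subseteq \mathbb R^n$ meager. For each $k\in\N$ the map $x\mapsto kx$ is a homeomorphism of $\mathbb R^n$, so $\{x\in\mathbb R^n: kx\in M\}=\frac1k M$ is meager. Hence $\bigcup_{k\in\N}\frac1kM\cup\{0\}$ is meager, and by the Baire category theorem there is $x\neq 0$ with $kx\notin M$ for all $k\in\N$.

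Next, since $K$ is bounded and $x\neq 0$, pick $N\in\N$ such that $kx\notin K$ for all $k\geq N$. Put
$$T=\{kx: k\geq N\}\quad\hbox{and}\quad P=\mathbb R^n\setminus T.$$
The set $T$ is closed and discrete in $\mathbb R^n$, because $|kx|\to\infty$. Therefore $P$ is open, and $C\subseteq K\cup M\subseteq P$. Moreover $P\in\MC(\mathbb R^n)^+$: if $P\subseteq L\cup M'$ with $L$ compact and $M'$ meager, then $\mathbb R^n\setminus L\subseteq M'\cup T$ would be meager. This is impossible, since $\mathbb R^n\setminus L$ is a nonempty open set in a Baire space.

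Finally, $\mathbb R^n\setminus P=T$, and $k\mapsto kx$ is a semigroup isomorphism from $(\{k\in\N:k\geq N\},+)$ onto $T$. A finite coloring of $T$ therefore induces one of the infinite semigroup $\{k\geq N\}$. Theorem~\ref{Hindman} then yields an injective sequence $B\subseteq\{k\geq N\}$ with $\FS(B)$ monochromatic, and we take $A=\{bx:b\in B\}$. Then $\FS(A)=\{mx: m\in\FS(B)\}\subseteq T$ is monochromatic. It is closed and discrete as a subset of the closed discrete set $T$.

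I do not expect any step to be a genuine obstacle. The only choice needing care is that of $x$, where the Baire category theorem ensures that the ray avoids $M$ entirely; after that, everything reduces to Hindman's theorem on $\N$.
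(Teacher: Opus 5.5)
Your proof is correct and follows the paper's argument. You use the Baire category theorem to find $x$ whose multiples avoid the meager part, take $P$ to be the complement of the resulting closed discrete ray, and reduce to Hindman's theorem on $\N$. The only difference is cosmetic: the paper absorbs $K$ into a box $B_k$ and uses $\frac1m B_k\subseteq B_k$, so the whole ray $\{mx:m\in\N\}$ avoids $C$, whereas you handle $K$ by truncating the ray at $N$.
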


\begin{proof}
There exist a compact subset $K\subseteq \mathbb R^n$ and a meager subset $D\subseteq \mathbb R^n$ such that $C\subseteq K\cup D$. Since $K$ is compact, there exists $k\in\N$ such that $$K\subseteq B_k=\{(x_1,\ldots,x_n)\in\mathbb R^n: |x_i|<k \hbox{ for all }i\leq n\}.$$ Put $C'=B_k\cup D$.
It is easy to see that for each $n\in \N$ we have $$\frac{1}{n}C'=\{(x_1,\ldots, x_n)\in \mathbb R^n: (nx_1,\ldots, nx_n)\in C'\}\subseteq B_k\cup \frac{1}{n}D,$$
and the set $\frac{1}{n}D$ is meager.
For an element $x=(x_1,\ldots, x_n)$ of $\mathbb R^n$ let $$S(x)=\{(nx_1,\ldots, nx_n): n\in\N\}.$$  To derive a contradiction, assume that $C'\cap S(x)\neq \emptyset$ for each $x\in \mathbb R^n$. Then 
$$\mathbb R^n=\bigcup_{n\in\N}\frac{1}{n}C'\subseteq B_k\cup \bigcup_{n\in\N}\frac{1}{n}D.$$ The above inclusion contradicts the Baire Category Theorem applied to $\mathbb R^n\setminus B_k$, as the set $\bigcup_{n\in\N}\frac{1}{n}D$ is meager. Hence there exists $x\in \mathbb R^n$ such that $S(x)\cap C'=\emptyset$. It is routine to check that $S(x)$ is an infinite closed discrete subsemigroup of $\mathbb R^n$. Put $P=\mathbb R^n\setminus S(x)$. It is clear that $C\subseteq C'\subseteq P$ and $P\in\MC(\mathbb R^n)^+$. By Theorem~\ref{Hindman}, for each finite coloring of $S(x)=\mathbb R^n\setminus P$ there exists a sequence $A=\{n_ix: i\in\N\}$ in $S(x)$ such that $\FS(A)$ is monochromatic.
It is straightforward to check that $\FS(A)$ is closed and discrete in $\mathbb R^n$. 
\end{proof}

Recall that each Hausdorff locally compact topological group $G$ admits a {\em Haar} measure, i.e. a $\sigma$-additive outer regular Borel measure $\mu$, which is invariant under left shifts and takes finite values on compact sets. Moreover, if $\mu_1$, $\mu_2$ are two Haar measures on $G$, then there exists a positive real $a$ such that $\mu_1(A)=a\mu_2(A)$ for each Borel subset $A\subseteq G$.  For more on Haar measures see~\cite[Section 2.2]{Fol}. For a fixed Haar measure $\mu$ on a locally compact Hausdorff topological group $G$, the {\em outer Haar measure} $\mu^*$ is defined for each subset $A$ of $G$ as follows: 
$$\mu^*(A)=\inf\{\mu(U): U \hbox{ is open and } A\subseteq U\}.$$  Further for a given Hausdorff locally compact topological group $G$, the symbol $\mu$ denotes a Haar measure on $G$. Since the Haar measure on such groups is unique up to a scaling constant, the following definition is independent of the specific choice of a Haar measure.

\begin{definition}
For a non-compact locally compact $T_1$ topological group $G$ let $\mathcal{H}_{\text{fin}}(G) = \{A \subseteq G : \mu^*(A) < \infty\}$.
\end{definition}

\begin{lemma}\label{primeopen}
Let $G$ be a locally compact non-compact $T_1$ topological group. Then $\Hf(G)$ is an ideal on $G$ that is not a near-maximal ideal.
\end{lemma}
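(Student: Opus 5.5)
First I will check that $\Hf(G)$ is an ideal. Closure under subsets is immediate from monotonicity of $\mu^*$. For unions, given $A,B$ with finite outer measure, I pick open $U\supseteq A$ and $V\supseteq B$ of finite measure. Then $U\cup V$ is open, contains $A\cup B$, and $\mu(U\cup V)\le\mu(U)+\mu(V)<\infty$. The set $\emptyset$ belongs to $\Hf(G)$, so the family is nonempty.

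It remains to show $G\notin\Hf(G)$, i.e.\ $\mu(G)=\infty$, and here non-compactness is used. Fix a compact neighborhood $K$ of $1_G$ with $\mu(K)>0$; this is possible because Haar measure is positive on nonempty open sets. Choose a symmetric open neighborhood $V$ of $1_G$ with $V^2\subseteq K$. Since $G$ is not compact, no finite union of translates $xV$ covers $G$ (a finite union of compact sets $x\overline V$ is compact, provided $\overline V\subseteq K$, which we arrange). Recursively I choose $x_1,x_2,\ldots$ with $x_{n}\notin\bigcup_{i<n}x_iV^2$. The translates $x_iV$ are then pairwise disjoint: if $x_iv=x_jw$ with $i<j$, then $x_j=x_ivw^{-1}\in x_iV^2$, a contradiction. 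Each has measure $\mu(V)>0$ by left invariance, so $\mu(G)\ge\sum_i\mu(x_iV)=\infty$.

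The main step is showing that $\Hf(G)$ is not near-maximal; I would mimic the argument used for $\NC(G)$ in the proof of Theorem~\ref{appl}. I need two open sets $W_0,W_1$ of infinite measure with $\overline{W_0}\cap\overline{W_1}=\emptyset$. Using the sequence $(x_i)$ above, I set $W_0=\bigcup_i x_{2i}V'$ and $W_1=\bigcup_i x_{2i+1}V'$, where $V'$ is a smaller symmetric open neighborhood with $\overline{V'}\subseteq V$. Each $W_j$ is open and contains infinitely many disjoint translates of $V'$, so it has infinite measure.

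Disjointness of the closures is the delicate part. Points of $\overline{W_0}$ need not lie in $\bigcup_i x_{2i}\overline{V'}$ unless the family $\{x_iV'\}$ is locally finite. I would obtain local finiteness as follows. If a point $y$ had a neighborhood $yU$ meeting infinitely many $x_iV'$, then taking $U$ with $U^{-1}U\subseteq V$ and small enough, infinitely many of the $x_i$ would lie in a set of the form $yUV'$. Two of them, $x_i$ and $x_j$, would then satisfy $x_i^{-1}x_j\in V'^{-1}U^{-1}UV'\subseteq V^2$ for suitably small neighborhoods. This contradicts the separation $x_j\notin x_iV^2$ built into the choice. To make this work I choose $V'$ and $U$ so that $V'^{-1}U^{-1}UV'\subseteq V^2$, shrinking as needed. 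With local finiteness, the closure of each union is the union of the closures $x_i\overline{V'}\subseteq x_iV$, and these are pairwise disjoint across parities. Hence $\overline{W_0}\cap\overline{W_1}=\emptyset$.

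Finally I argue as in Theorem~\ref{appl}. Suppose $u$ is an ultrafilter with $\{G\setminus\overline U:U\in u\}\subseteq\Hf(G)$. If $W_0\in u$, then $W_1\subseteq G\setminus\overline{W_0}$ would have finite measure, which is impossible. If $G\setminus W_0\in u$, then $W_0=G\setminus\overline{G\setminus W_0}$ would have finite measure, which is also impossible. Hence $\Hf(G)$ is not near-maximal. I expect the locally finite separation of the translates to be the only real obstacle; everything else is routine.
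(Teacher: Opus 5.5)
Your proof is correct. The first half is essentially the paper's argument: you get $\mu(G)=\infty$ from pairwise disjoint translates $x_iV$ with $x_n\notin\bigcup_{i<n}x_iV^2$, using non-compactness to continue the recursion. Where you diverge is the non-near-maximality step. There you take a noticeably heavier route than the paper.

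You copy the $\NC(G)$ argument from Theorem~\ref{appl}, which requires open sets $W_0,W_1$ with $\overline{W_0}\cap\overline{W_1}=\emptyset$. That forces you to shrink to $V'$ and prove that $\{x_iV'\}$ is locally finite. Your argument for this is sound; it even shows that a suitable neighbourhood $yU$ meets at most one translate, so the family is discrete.

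The paper avoids closures of unions entirely. Put $O_0=\bigcup_n x_{2n}V$ and $O_1=\bigcup_n x_{2n+1}V$. These are disjoint open sets, so the closed sets $G\setminus O_0$ and $G\setminus O_1$ cover $G$. Hence one of them, say $F=G\setminus O_j$, belongs to $u$. Because $F$ is closed, $G\setminus\overline{F}=O_j$, which has infinite measure. This contradicts $\{G\setminus\overline U:U\in u\}\subseteq\Hf(G)$.

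So only disjointness of the open sets themselves is needed. The step you flagged as the main obstacle, local finiteness of the translates, is unnecessary. Even in your own case analysis, the first case uses only $\overline{W_0}\cap W_1=\emptyset$, not full disjointness of closures. Your version does yield the stronger separation $\overline{W_0}\cap\overline{W_1}=\emptyset$, but nothing in this lemma requires it. The same simplification would also work for the $\NC(G)$ argument you were imitating.
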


\begin{proof}
Let us first show that $\mu(G)=\infty$. 
By~\cite[Proposition 2.19]{Fol}, for each open subset $U$ of $G$ we have $\mu(U)>0$. Fix an open neighborhood $V$ of $1_G$ with compact closure. Since $G$ is a topological group, the set $VV^{-1}$ also has compact closure. Put $x_1=1_G$ and fix any $x_2\in G\setminus VV^{-1}$, which exists as $G$ is not compact. Assume that we already constructed points $\{x_1,\ldots,x_n\}\subseteq G$ such that $x_i\in G\setminus \bigcup_{j<i}x_jVV^{-1}$. Let $x_{n+1}$ be any point of $G\setminus  \bigcup_{j\leq n}x_jVV^{-1}$, which exists as $\bigcup_{j\leq n}x_jVV^{-1}$ is compact and $G$ is not. This way we obtain a sequence $\{x_n:n\in\N\}$ in $G$. It is easy to check that $x_iV\cap x_jV=\emptyset$ whenever $i\neq j$. Since $\mu$ is $\sigma$-additive and invariant under left shifts, we get $$\mu(G)\geq \mu(\bigcup_{n\in\N}x_nV)=\sum_{n\in \N}\mu(V)=\infty.$$ 

At this point it is easy to see that $\Hf(G)$ is an ideal.
To derive a contradiction, assume that there exists an ultrafilter $u$ on $G$ such that $\{G\setminus\overline{U}: U\in u\}\subseteq \Hf(G)$. 
Note that either $G\setminus(\bigcup_{n\in \N}x_{2n} V)\in u$, or $G\setminus(\bigcup_{n\in \N}x_{2n+1} V)\in u$. Each case provides $U\in u$ such that $\mu(G\setminus \overline{U})=\infty$. The obtained contradiction yields that $\Hf(G)$ is not a near-maximal ideal.
\end{proof}

For a given non-compact locally compact Polish topological group $G$, it is straightforward to check that $\Hf(G)$ is a left-invariant exhaustive ideal which has a base consisting of open sets. It follows that $\chi(\Hf(G))=|G|^+$ and each subset $A\notin \Hf(G)$ of $G$ is not $\Hf(G)$-compact in $G$. Then, taking into account Lemma~\ref{primeopen}, Theorem~\ref{main} implies the following:

\begin{theorem}\label{Hf}
   Let $G$ be a locally compact non-compact Polish topological group, $C$ be a fixed element of $\Hf(G)$, and $S$ be a subsemigroup of $G$ of infinite outer Haar measure. Then there exists an open set $P$ of infinite outer Haar measure such that $C\subseteq P$, and for every finite coloring of $S\setminus P$ there exists a family $\mathcal A$ of sequences in $S\setminus P$ which satisfies the following conditions:
   \begin{enumerate}[\rm(i)]
  \item each $A\in\mathcal A$ is closed and discrete in $G$;
  \item the set $\bigcup_{A\in\mathcal A}\FS(A)$ is monochromatic;
  \item $\FS(A)\cap \FS(B)=\emptyset$ for all distinct $A,B\in\mathcal A$;
  \item the set $\bigcup_{A\in\mathcal A}\FS(A)$ has infinite outer Haar measure.
    \end{enumerate}
  Moreover, if $\Hf(G)$ is additionally nicely exhaustive with respect to $S$, then 
  \begin{enumerate}[\rm(v)]
  \item $\FS(A)$ is a closed discrete subset of $G$ for all $A\in\mathcal A$.
  \end{enumerate}
\end{theorem}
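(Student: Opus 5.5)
The plan is to deduce Theorem~\ref{Hf} directly from Theorem~\ref{main}, applied to the ideal $J=\Hf(G)$ and the subsemigroup $S$. First we verify the hypotheses of Theorem~\ref{main}. Since $G$ is a non-compact locally compact Polish topological group, it is $T_1$ and in particular a left topological group. The ideal $\Hf(G)$ is an ideal by Lemma~\ref{primeopen}. It is left-invariant because the Haar measure is invariant under left shifts and left shifts are homeomorphisms, so they map open covers to open covers and hence preserve $\mu^*$. For exhaustiveness, note that $G$ is $\sigma$-compact and locally compact, so there is an increasing sequence of open sets $V_n$ with compact closures whose union is $G$. Each $V_n$ has finite Haar measure, hence $V_n\in\Hf(G)\cap o(G)$.

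Next we treat the two cardinal-type hypotheses. By the outer regularity built into the definition of $\mu^*$, every $A\in\Hf(G)$ is contained in an open $U$ with $\mu(U)<\infty$, so $\Hf(G)$ has a base of open sets. By the remark after Definition~\ref{defb}, no cardinal satisfies ($*$), so $\chi(\Hf(G))=|G|^+>|S|$. To see that $S$ is not $\Hf(G)$-compact in $G$, take $\phi(A)$ to be an open set of finite measure containing $A$. Then any finite union $\bigcup_{A\in F}\phi(A)$ has finite measure, so it cannot cover $S$, because $\mu^*(S)=\infty$. Thus all hypotheses of Theorem~\ref{main} hold for the given $C\in\Hf(G)$.

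Theorem~\ref{main} then yields an open $P\supseteq C$ and, for each finite coloring of $S\setminus P$, a family $\mathcal A$ satisfying (i)--(iv). Condition (iv) there, $\bigcup_{A\in\mathcal A}\FS(A)\in J^+$, says precisely that this union has infinite outer Haar measure. By Lemma~\ref{primeopen}, $\Hf(G)$ is not near-maximal, so item (vi) of Theorem~\ref{main} gives $P\in J^+$, i.e.\ $\mu(P)=\infty$. Item (v) of Theorem~\ref{main} transfers verbatim to give the ``moreover'' clause under nice exhaustiveness.

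The only step requiring some care is checking that $\Hf(G)$ has an open base and is left-invariant, which rests on outer regularity and left invariance of $\mu$. This is essentially routine, so I do not anticipate a real obstacle.
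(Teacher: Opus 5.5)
Your proposal is correct and matches the paper's own derivation: show that $\Hf(G)$ is a left-invariant exhaustive ideal with a base of open sets, so $\chi(\Hf(G))=|G|^+$ and any set of infinite outer Haar measure is not $\Hf(G)$-compact. Then apply Theorem~\ref{main}, using Lemma~\ref{primeopen} to get item (vi), i.e.\ $P\in\Hf(G)^+$. You simply spell out the verifications that the paper describes as straightforward.
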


Recall that the usual Lebesgue measure on $\mathbb R^n$ is in fact a Haar measure. Further, if we are dealing with $\mathbb R^n$, then $\mu$ denotes the Lebesgue measure. For each $m\in\N$ let 
\begin{equation}\label{Bm}
 B_m=\{(x_1,\ldots, x_n)\in\mathbb R^n: |x_i|<m \hbox{ for all }i\leq n\}.   
\end{equation}

The following simple lemma is well-known. Nevertheless, we include its short proof.

\begin{lemma}\label{Rn}
Let $S$ be a subsemigroup of $\mathbb R^n$ such that $\mu^*(S)>0$. Then $\mu^*(S)=\infty$.    
\end{lemma}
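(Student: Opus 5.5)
The idea is to use that $S$ is closed under addition, so $kS\subseteq S$ for every $k\in\N$, together with the scaling behaviour of Lebesgue measure. Since $\mu^*(S)>0$ and outer measure is countably subadditive, $\mu^*(S\cap B_m)>0$ for some $m$, where $B_m$ is the box from~(\ref{Bm}). Put $c=\mu^*(S\cap B_m)$, so $0<c\leq (2m)^n<\infty$.

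The simplest route goes through dilations. For $k\in\N$ the set $kS=\{x+\dots+x: x\in S\}$ ($k$ summands) is contained in $S$. Lebesgue outer measure scales as $\mu^*(kA)=k^n\mu^*(A)$, so $\mu^*(S)\geq \mu^*(k(S\cap B_m))=k^n c$. Letting $k\to\infty$ gives $\mu^*(S)=\infty$.

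The only point to justify is the scaling identity for outer measure. The paper defines $\mu^*$ via open covers, and $x\mapsto kx$ is a homeomorphism of $\mathbb R^n$ that maps open sets to open sets and multiplies the Lebesgue measure of Borel sets by $k^n$. Hence the infimum over open covers scales by $k^n$, and nothing beyond this is needed.
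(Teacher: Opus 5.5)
Your proof is correct, but it takes a different route from the paper. The paper also restricts to $S\cap B_m$ with $\mu^*(S\cap B_m)>0$. It then fixes a nonzero $x\in S$ and chooses $k_i$ so that the boxes $k_ix+B_m$ are pairwise disjoint. Since $k_ix+(S\cap B_m)\subseteq S$, it adds up infinitely many copies of $\mu^*(S\cap B_m)$ using translation invariance. You use the dilation $y\mapsto ky$ instead: it maps $S$ into $S$ because $ky$ is a $k$-fold sum, and $\mu^*(kA)=k^n\mu^*(A)$. Your justification of the scaling identity via the open-set definition of $\mu^*$ is exactly what is needed.

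Your approach has a real advantage. The paper's summation step is additivity of outer measure over disjoint sets, which fails for arbitrary disjoint sets. It holds here only because the pieces lie in the disjoint Borel sets $k_ix+B_m$ (Carath\'eodory measurability), a point the paper passes over. Your argument needs only monotonicity of $\mu^*$. You do not even need the truncation to $B_m$: if $0<\mu^*(S)<\infty$, then $\mu^*(S)\geq\mu^*(2S)=2^n\mu^*(S)>\mu^*(S)$ is already a contradiction.

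What the paper's translation argument offers is portability. It uses only left-invariance of the measure and the fact that multiples of some element of $S$ escape every bounded set. That is the kind of reasoning that carries over to Haar measure on other locally compact groups. Your argument depends on the measure-expanding endomorphisms $y\mapsto ky$, which are special to $\mathbb R^n$ and similar homogeneous groups.
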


\begin{proof}
It is easy to see that there exists $m\in\N$ such that $\mu^*(S\cap B_m)>0$. Fix a non-zero $x\in S$. There exists a sequence $\{k_i: i\in\N\}\subseteq \N$ such that for each $i\neq j$ we have
$$(k_ix+B_m)\cap (k_jx+B_m)=\emptyset.$$ Since $x\in S$, we have $nx\in S$ for all $n\in\N$. It follows $k_ix+(S\cap B_m)\subseteq S$ for all $i\in\N$. By the choice of $k_i$, we have $(k_ix+(S\cap B_m))\cap (k_jx+(S\cap B_m))=\emptyset$ for all distinct $i,j\in\N$. Since the outer Lebesgue measure is shift-invariant, $\mu^*(k_ix+(S\cap B_m))=\mu^*(S\cap B_m)$ for all $i\in\N$. Thus $\mu^*(S)\geq \sum_{i\in\N}\mu^*(S\cap B_m)=\infty$.   
\end{proof}

By $\nN$ we denote the least cardinality of a subset of $\mathbb R^n$ with positive outer Lebesgue measure. It is known that $\aleph_1\leq \nN\leq 2^{\aleph_0}$, see~\cite{Blass}.
The family $\{B_m:m\in\N\}$ (see Equation~\ref{Bm} above) witnesses that the ideal $\Hf(\mathbb R^n)$ is nicely exhaustive with respect to any subsemigroup $S\subseteq \mathbb R^n_+$ of infinite outer Lebesgue measure. Thus Theorem~\ref{Hf} and Lemma~\ref{Rn} imply the following:

\begin{corollary}
   Let $C\in \Hf(\mathbb R^n)$, and $S$ be a subsemigroup of $\mathbb R^n_+$ with $\mu^*(S)>0$. Then there exists an open set $P\supseteq C$ of infinite outer Haar measure such that for every finite coloring of $S\setminus P$ there exists a family $\mathcal A$ of cardinality $\geq \nN$ consisting of sequences in $S\setminus P$ which satisfies the following conditions:
   \begin{enumerate}[\rm(i)]
  \item $\FS(A)$ is a closed discrete subset of $\mathbb R^n$ for all $A\in\mathcal A$;
  \item the set $\bigcup_{A\in\mathcal A}\FS(A)$ is monochromatic;
  \item $\FS(A)\cap \FS(B)=\emptyset$ for all distinct $A,B\in\mathcal A$;
  \item the set $\bigcup_{A\in\mathcal A}\FS(A)$ has infinite outer Lebesgue measure.
    \end{enumerate}
\end{corollary}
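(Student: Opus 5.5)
Corollary (the last statement) will follow from Theorem~\ref{Hf} applied to $G=\mathbb R^n$, $J=\Hf(\mathbb R^n)$, the given $C$ and $S$. Conditions (i)–(iv) are then read off, and the cardinality bound $|\mathcal A|\geq\nN$ is extracted from (iv) together with the countability of each $\FS(A)$.

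\textbf{Step 1: hypotheses of Theorem~\ref{Hf}.} $\mathbb R^n$ is a locally compact, non-compact Polish topological group, and Lebesgue measure is a Haar measure on it. Since $\mu^*(S)>0$, Lemma~\ref{Rn} gives $\mu^*(S)=\infty$, so $S$ has infinite outer Haar measure.

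\textbf{Step 2: nice exhaustiveness.} Take the boxes $B_m$ from Equation~\eqref{Bm}. They are open, increasing, cover $\mathbb R^n$, and lie in $\Hf(\mathbb R^n)$. Since $\mu^*(S)=\infty$ and $\mu(B_m)<\infty$, the set $S\setminus B_m$ is nonempty. It is a left ideal in $S$: if $s\in S\subseteq\mathbb R^n_+$ and $x\in S\setminus B_m$, then some coordinate satisfies $x_i\geq m$ (all coordinates are positive), so $(s+x)_i>x_i\geq m$ and hence $s+x\notin B_m$.

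\textbf{Step 3: apply Theorem~\ref{Hf}.} This yields an open $P\supseteq C$ of infinite outer measure. For every finite coloring of $S\setminus P$ it yields a family $\mathcal A$ satisfying (i)–(iv) of Theorem~\ref{Hf}, together with (v) there, which is item (i) of the corollary.

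\textbf{Step 4: $|\mathcal A|\geq\nN$.} The only point needing an argument is this bound. Each $\FS(A)$ is countable, so $X=\bigcup_{A\in\mathcal A}\FS(A)$ has cardinality at most $|\mathcal A|\cdot\aleph_0$. By (iv), $\mu^*(X)=\infty>0$, so by the definition of $\nN$ we get $|X|\geq\nN$. Since $\nN\geq\aleph_1$, it follows that $|\mathcal A|\cdot\aleph_0\geq\aleph_1$, hence $|\mathcal A|$ is uncountable and $|\mathcal A|=|\mathcal A|\cdot\aleph_0\geq\nN$.

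This same count is what makes $\mathcal A$ infinite in the first place. Nothing here is really an obstacle; the step requiring the most care is the left-ideal verification in Step 2, which uses positivity of the coordinates of elements of $S$.
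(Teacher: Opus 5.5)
Your proposal is correct and follows the paper's route exactly. Lemma~\ref{Rn} upgrades $\mu^*(S)>0$ to $\mu^*(S)=\infty$, the boxes $B_m$ witness that $\Hf(\mathbb R^n)$ is nicely exhaustive with respect to $S$, and Theorem~\ref{Hf} then delivers (i)--(iv); your countability argument in Step~4 is a correct expansion of the paper's one-line remark that $|\mathcal A|\geq\nN$ is forced by condition~(iv).
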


The inequality $|\mathcal A|\geq \nN$ in the above corollary is forced by condition~(iv).

Since the Haar measure on locally compact Hausdorff topological groups is unique up to a scaling constant, the following two definitions are independent of the specific choice of a Haar measure.

\begin{definition}\label{test}
Let $G$ be a topological group. A family $\{B_n: n\in\N\}\subseteq o(G)$ is called a {\em F{\o}lner chain} if for each $n\in\N$ it satisfies the following conditions:
\begin{enumerate}
    \item $\overline{B_n}\subseteq B_{n+1}$;
    \item $\overline{B_n}$ is compact;
    \item $\bigcup_{n\in\N}B_n=G$;
    \item $\mu(\overline{B_n}\setminus B_n)=0$;
    \item for each $g\in G$ we have 
    $$\lim_{n\to \infty}\frac{\mu(gB_n\setminus B_n)}{\mu(B_n)}=0.$$
\end{enumerate}
\end{definition}

Note that every topological group which possesses a F{\o}lner chain is locally compact, non-compact and $\sigma$-compact. 

\begin{definition}
 Let $\Delta=\{B_n:n\in\N\}$ be a F{\o}lner chain in a $T_1$ topological group $G$. A subset $A\subseteq G$ is said to have {\em asymptotic density zero with respect to $\Delta$} if there exists a Borel subset $A'\supseteq A$ such that  $$\lim_{n\to\infty} \frac{\mu(A'\cap B_n)}{\mu(B_n)}=0.$$ 
 Let $\Z_{\Delta}(G)$ denote the ideal on $G$ consisting of the sets with asymptotic density zero with respect to the F{\o}lner chain $\Delta$.
\end{definition}

\begin{lemma}\label{asy}
 Let $\Delta=\{B_n:n\in\N\}$ be a F{\o}lner chain in a $T_1$ topological group $G$. Then $\Z_{\Delta}(G)$ is not a near-maximal ideal.
\end{lemma}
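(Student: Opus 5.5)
We need to show that no ultrafilter $u$ on $G$ has $\{G\setminus \overline{U}: U\in u\}\subseteq \Z_\Delta(G)$. The plan mirrors Lemma~\ref{primeopen}: find two disjoint open sets $W_0,W_1$ with disjoint closures, both of positive upper density. If $W_0\in u$, then $G\setminus\overline{W_0}\supseteq W_1$ cannot lie in $\Z_\Delta(G)$. If $W_0\notin u$, then $G\setminus W_0\in u$, and $G\setminus\overline{G\setminus W_0}$ is the interior of $W_0$, which equals $W_0$; so $W_0$ would lie in $\Z_\Delta(G)$. Both cases are impossible. It remains to build such open sets, and to check that an open set $W$ with $\limsup_n \mu(W\cap B_n)/\mu(B_n)>0$ is not in $\Z_\Delta(G)$. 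The latter is immediate: any Borel $A'\supseteq W$ has $\mu(A'\cap B_n)\geq\mu(W\cap B_n)$.

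To construct the sets, I would pass to a subsequence of the chain so that the measures grow fast. Since $G$ is non-compact, $\mu(G)=\infty$ by the argument in Lemma~\ref{primeopen}. Since $\bigcup B_n=G$ is increasing, $\mu(B_n)\to\infty$. Choose indices $n_1<n_2<\cdots$ with $\mu(B_{n_{k+1}})\geq 2^k\mu(\overline{B_{n_k+1}})$, using that $\overline{B_n}$ is compact and so has finite measure. Put
\[
W_0=\bigcup_{k \text{ even}}\bigl(B_{n_{k+1}}\setminus \overline{B_{n_k+1}}\bigr),\qquad W_1=\bigcup_{k \text{ odd}}\bigl(B_{n_{k+1}}\setminus \overline{B_{n_k+1}}\bigr).
\]
Each piece is open. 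Consecutive pieces are separated by a collar. For instance, the piece for $k$ lies inside $B_{n_{k+1}}\subseteq \overline{B_{n_{k+1}}}\subseteq B_{n_{k+1}+1}$, while the piece for $k+1$ lies outside $\overline{B_{n_{k+1}+1}}$. Hence the closure of the piece for $k$ is contained in $\overline{B_{n_{k+1}}}$, which is disjoint from the closure of any later piece, since that closure lies in the complement of $B_{n_{k+1}+1}$.

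For the closures of the two unions, the main point is local finiteness. A point $x$ lies in some $B_m$, and the open set $B_m$ meets only finitely many pieces. So near $x$ each union is a finite union, and the closure of each union is the union of the closures of its pieces. Pieces of different parity have disjoint closures, so $\overline{W_0}\cap\overline{W_1}=\emptyset$. For density, the piece indexed by $k$ satisfies
\[
\frac{\mu\bigl(W\cap B_{n_{k+1}}\bigr)}{\mu\bigl(B_{n_{k+1}}\bigr)}\geq 1-2^{-k}
\]
for the corresponding $W\in\{W_0,W_1\}$. Therefore both $W_0$ and $W_1$ have upper density $1$ along the chain.

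I expect the only delicate step to be the closure bookkeeping, that is, confirming that the collars $\overline{B_{n_k+1}}\setminus B_{n_k}$ separate the pieces. Condition (1) of Definition~\ref{test} handles this. The Følner property (5) and the boundary condition (4) are not needed here.
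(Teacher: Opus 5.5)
Your proposal is correct and follows essentially the same route as the paper. The paper also builds two open unions of annuli along a fast-growing subsequence of the chain, each of upper density $1$, with the closure of one disjoint from the other, and then runs the same two-case ultrafilter argument. The differences are cosmetic: the paper separates its blocks by skipping indices (period $4$, with pieces $B_{n_{4k}}\setminus\overline{B_{n_{4k-1}}}$ and $B_{n_{4k+2}}\setminus\overline{B_{n_{4k+1}}}$) instead of your collars $\overline{B_{n_k+1}}\setminus B_{n_k}$, and it only checks $\overline{W}\cap W'=\emptyset$ rather than full disjointness of the closures.
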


\begin{proof}
Suppose towards a contradiction that there exists an ultrafilter $u$ on $G$ such that $\{G \setminus \overline{U}: U \in u\}\subseteq \mathcal{Z}_{\Delta}(G)$. 
 Since $G$ is locally compact and non-compact, the proof of Lemma~\ref{primeopen} yields $\mu(G)=\infty$. It follows that $\mu(B_n) \to \infty$. Then there exists a strictly increasing sequence of indices $\{n_k: k\in\N\} \subseteq \mathbb{N}$ such that:
$$\frac{\mu(\overline{B_{n_k}})}{\mu(B_{n_{k+1}})} < \frac{1}{2^k} \quad \text{for all } k \ge 1.$$
Let
$$W = \bigcup_{k\in \N}\left(B_{n_{4k}} \setminus \overline{B_{n_{4k-1}}}\right), \quad \hbox{ and } \quad W' = \bigcup_{k\in\N} \left(B_{n_{4k+2}} \setminus \overline{B_{n_{4k+1}}}\right).$$

For each $k \ge 1$, we have $\overline{B_{n_{4k}} \setminus \overline{B_{n_{4k-1}}}} \subseteq \overline{B_{n_{4k}}} \subseteq B_{n_{4k+1}}$. Note that the family $\{B_{n_{4k}} \setminus \overline{B_{n_{4k-1}}}: k\in\N\}$ is locally finite.  Hence $\overline{W} \subseteq \bigcup_{k=1}^\infty B_{n_{4k+1}}$. By the definition of $W'$, we have $\overline{W} \cap W' = \emptyset$.

Since $u$ is an ultrafilter, either $G \setminus W \in u$ or $W \in u$. First assume that $G \setminus W \in u$. By the choice of $u$, $W=G \setminus \overline{G\setminus W}  \in \mathcal{Z}_{\Delta}(G)$. However, $$\frac{\mu(W \cap B_{n_{4k}})}{\mu(B_{n_{4k}})} \geq \frac{\mu(B_{n_{4k}} \setminus \overline{B_{n_{4k-1}}})}{\mu(B_{n_{4k}})} = 1 - \frac{\mu(\overline{B_{n_{4k-1}}})}{\mu(B_{n_{4k}})} > 1 - \frac{1}{2^{4k-1}}.$$
It follows that $1$ is an accumulation point of the sequence $\{\frac{\mu(W\cap B_n)}{\mu(B_n)}: n\in\N\}$, contradicting $W \in \mathcal{Z}_{\Delta}(G)$. Assume that $W \in u$. Then $W'\subseteq G \setminus \overline{W} \in \mathcal{Z}_{\Delta}(G)$. Observe that 
$$ \frac{\mu(W' \cap B_{n_{4k+2}})}{\mu(B_{n_{4k+2}})} \ge\frac{\mu(B_{n_{4k+2}} \setminus \overline{B_{n_{4k+1}}})}{\mu(B_{n_{4k+2}})}\geq 1 - \frac{\mu(\overline{B_{n_{4k+1}}})}{\mu(B_{n_{4k+2}})} > 1 - \frac{1}{2^{4k+1}}.$$
It follows that $1$ is an accumulation point of the sequence $\{\frac{\mu(W'\cap B_n)}{\mu(B_n)}: n\in\N\}$, contradicting $W' \in \mathcal{Z}_{\Delta}(G)$. 
These contradictions imply that $\mathcal{Z}_{\Delta}(G)$ is not a near-maximal ideal.
\end{proof}
%


\begin{theorem}\label{AD}
Let $\Delta$ be a F{\o}lner chain in a $T_1$ topological group $G$, $C\in \Z_{\Delta}(G)$, and $S$ be a subsemigroup of $G$ such that $S\notin\Z_{\Delta}(G)$. Then there exists an open set $P\in \Z_{\Delta}(G)^+$ such that $C\subseteq P$, and for every finite coloring of $S\setminus P$ there exists a family $\mathcal A$ of sequences in $S\setminus P$ which satisfies the following conditions: 
   \begin{enumerate}[\rm(i)]
   \item  each $A\in\mathcal A$ is closed and discrete in $G$;
  \item the set $\bigcup_{A\in\mathcal A}\FS(A)$ is monochromatic;
  \item $\FS(A)\cap \FS(B)=\emptyset$ for all distinct $A,B\in\mathcal A$;
   \item $\bigcup_{A\in\mathcal A}\FS(A)\in\Z_{\Delta}(G)^+$.
  \end{enumerate}
   Moreover, if $\Z_{\Delta}(G)$ is additionally nicely exhaustive with respect to $S$, then
  \begin{enumerate}[\rm(v)]
  \item $\FS(A)$ is a closed discrete subset of $G$ for all $A\in\mathcal A$.
  \end{enumerate}  
\end{theorem}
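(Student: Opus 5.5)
Theorem~\ref{AD} will be obtained directly from Theorem~\ref{main}, applied to the ideal $J=\Z_{\Delta}(G)$. Item (vi) of Theorem~\ref{main} together with Lemma~\ref{asy} then gives $P\in\Z_\Delta(G)^+$. So the plan is to verify the hypotheses of Theorem~\ref{main} one at a time.

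\emph{$\Z_\Delta(G)$ is an ideal.} Closure under subsets and under finite unions (take the union of the Borel witnesses) is immediate. Moreover $G\notin\Z_\Delta(G)$, since $\mu(G\cap B_n)/\mu(B_n)=1$.

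\emph{Left-invariance.} Given $A\subseteq A'$ with $A'$ Borel of density zero and $g\in G$, the set $gA'$ is Borel and contains $gA$. By left invariance of $\mu$,
$$\mu(gA'\cap B_n)=\mu(A'\cap g^{-1}B_n)\le \mu(A'\cap B_n)+\mu(g^{-1}B_n\setminus B_n).$$
Dividing by $\mu(B_n)$, the first term tends to $0$ by assumption and the second by the F{\o}lner condition (5) applied to $g^{-1}$. Hence $gA\in\Z_\Delta(G)$.

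\emph{Exhaustiveness.} The sets $B_n$ themselves are open and increasing, they cover $G$ by (3), and each has density zero because $\mu(B_n)\to\infty$ (as in the proof of Lemma~\ref{asy}) while $\mu(B_k)$ stays fixed. The group $G$ is $T_1$ and topological, so it is in particular left topological.

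\emph{Cardinality condition $|S|<\chi(J)$.} This is the step I expect to be the main obstacle, since $S$ may well be uncountable. I would bypass it exactly as in the proof of Theorem~\ref{appl}: pass to a countable subsemigroup $S'\subseteq S$ with $S'\notin\Z_\Delta(G)$, so that $|S'|=\aleph_0<\aleph_1\le\chi(J)$ by Proposition~\ref{card}. To build $S'$, use that $S\notin\Z_\Delta(G)$ means $\mu^*$-type upper density of $S$ is positive; more precisely, for every Borel $A'\supseteq S$ the ratio $\mu(A'\cap B_n)/\mu(B_n)$ fails to tend to $0$. It is not obvious that a countable set can be positive, since countable sets are typically null. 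If Haar measure is non-atomic, every countable set lies in an open set of arbitrarily small measure, which makes this route delicate.

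The alternative that avoids this is to show directly that $\chi(\Z_\Delta(G))=|G|^+$, or at least that $\Z_\Delta(G)$ has a base of open sets (the remark after Definition~\ref{defb}). Concretely, given a Borel $A'$ of density zero, I would find an open $U\supseteq A'$ of density zero. On each compact annulus $\overline{B_{n+1}}\setminus B_n$, outer regularity gives an open $U_n\supseteq A'\cap(\overline{B_{n+1}}\setminus B_{n-1})$ inside $B_{n+2}$ with $\mu(U_n)\le\mu(A'\cap\overline{B_{n+1}})+2^{-n}$. Condition (4) guarantees the boundaries are null, and one can take $U=\bigcup U_n$. Controlling the overlaps requires care, but it should give $\mu(U\cap B_n)\le \mu(A'\cap B_{n+2})+O(1)$, and one must then still compare $\mu(B_{n+2})$ with $\mu(B_n)$. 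If that comparison fails, I would instead choose the $U_n$ with errors $\varepsilon_n\mu(B_n)$ where $\varepsilon_n\to0$, which suffices because overlaps are bounded (each point lies in at most three $U_n$). Once the open base is established, $\chi(J)=|G|^+>|S|$. Also, since $S\notin J$ and $J$ has an open base, $S$ is not $J$-compact in $G$: take $\phi(A)$ to be an open member of $J$ containing $A$; then finitely many such sets have union in $J$ and cannot cover $S$.

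With all hypotheses verified, Theorem~\ref{main} yields (i)–(iv), the nicely exhaustive clause gives (v), and Lemma~\ref{asy} together with item (vi) gives $P\in\Z_\Delta(G)^+$.
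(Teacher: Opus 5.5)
Your reduction is the paper's: the same F{\o}lner estimate for left-invariance, $\Delta\subseteq\Z_\Delta(G)$ for exhaustiveness, Lemma~\ref{asy} for item (vi), and an open base for $\Z_\Delta(G)$. The open base gives both $\chi(\Z_\Delta(G))=|G|^+>|S|$ and, using $S\notin\Z_\Delta(G)$, that $S$ is not $\Z_\Delta(G)$-compact. You were right to drop the countable-subsemigroup route: for non-discrete $G$ every countable set is Haar-null, so it lies in $\Z_\Delta(G)$.

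The gap is in the one substantive step, the open base, which your sketch does not establish.

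\begin{itemize}
\item As written, the bound $\mu(U_n)\le\mu(A'\cap\overline{B_{n+1}})+2^{-n}$ is cumulative. It permits $\mu(U_n\setminus A')$ to be as large as $\mu(A'\cap B_{n-1})$, which is unbounded when $\mu(A')=\infty$. So the $U_n$ could eventually swallow every $B_m$, giving $U=G$.
\item Even with the annular bound, the estimate you aim for, $\mu(U\cap B_n)\le\mu(A'\cap B_{n+2})+O(1)$, does not imply density zero. A F{\o}lner chain may have $\mu(B_{n+2})/\mu(B_n)\to\infty$ (e.g.\ $B_n=(-2^{2^n},2^{2^n})$ in $\mathbb R$). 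Then a density-zero $A'$ can still have $\mu(A'\cap B_{n+2})/\mu(B_n)\to\infty$.
\item Your fallback with errors $\varepsilon_n\mu(B_n)$ only shrinks the error terms and leaves this main term untouched.
\end{itemize}

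The missing idea is to control the excess $U\setminus A'$ instead of summing $\mu(U_n)$. Take $A'$ Borel and set $B_0=\emptyset$. By outer regularity, cover each finite-measure piece $A'\cap(B_n\setminus B_{n-1})$ by an open $O_n$ with $\mu(O_n\setminus A')\le 2^{-n}$, and put $O=\bigcup_n O_n$. Then:
\begin{itemize}
\item $O\supseteq A'$ is open and $\mu(O\setminus A')\le 1$;
\item $O\cap B_m\subseteq(A'\cap B_m)\cup(O\setminus A')$, so $\mu(O\cap B_m)\le\mu(A'\cap B_m)+1$;
\item since $\mu(B_m)\to\infty$, this gives $O\in\Z_\Delta(G)$.
\end{itemize}
No localization, overlap counting, or condition (4) is needed.

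The paper gets the analogous bound $\mu(W_A\cap B_m)\le\mu(A\cap B_m)+2$ differently. It covers the disjoint pieces $A_m=A\cap(B_m\setminus\overline{B_{m-1}})$ by open sets $U_m$ disjoint from $\overline{B_{m-1}}$ with $\mu(U_m)\le\mu(A_m)+2^{-m}$. It then uses condition (4) to cover the null boundaries $\overline{B_m}\setminus B_m$ by open sets of measure at most $2^{-m}$.

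Once this step is repaired, the rest of your argument goes through as you wrote it.
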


\begin{proof}
To apply Theorem~\ref{main}, we need to check that $\Z_{\Delta}(G)$ is a left-invariant exhaustive non-near-maximal ideal, $S$ is not $\Z_{\Delta}(G)$-compact in $G$, and $|S|<\chi(\Z_{\Delta}(G))$.

By Lemma~\ref{asy}, $\Z_{\Delta}(G)$ is not a near-maximal ideal.

Let us check that the ideal $\Z_{\Delta}(G)$ is left-invariant. Fix any $A\in \Z_{\Delta}(G)$ and $g\in G$.  
Since the Haar measure $\mu$ is invariant under left shifts, and $A\cap g^{-1}B_n\subseteq (A\cap B_n)\cup (g^{-1}B_n\setminus B_n)$ we get the following:
$$\frac{\mu(gA \cap B_n)}{\mu(B_n)} = \frac{\mu(g^{-1}(gA \cap B_n))}{\mu(B_n)}= \frac{\mu(A \cap g^{-1}B_n)}{\mu(B_n)} \leq \frac{\mu(A\cap B_n)}{\mu(B_n)} + \frac{\mu(g^{-1}B_n \setminus B_n)}{\mu(B_n)}.$$
Taking into account that $A\in\Z_{\Delta}(G)$ and Condition (5) in Definition~\ref{test}, we obtain the following:  
$$\lim_{n\to\infty}\frac{\mu(gA \cap B_n)}{\mu(B_n)} \leq \lim_{n\to\infty}\frac{\mu(A \cap B_n)}{\mu(B_n)} + \lim_{n\to\infty}\frac{\mu(g^{-1}B_n \setminus B_n)}{\mu(B_n)}=0+0=0.$$
Hence the ideal $\Z_{\Delta}(G)$ is left-invariant.

By the proof of Lemma~\ref{primeopen}, $\mu(G)=\infty$. 
The compactness of $\overline{B_n}$ implies that $\mu(B_n)<\infty$ for all $n\in\N$. Thus $\Delta\subseteq \Z_{\Delta}(G)$ and, consequently, the ideal $\Z_{\Delta}(G)$ is exhaustive.

We claim the ideal $\Z_{\Delta}(G)$ has a base consisting of open sets. Indeed, fix any $A\in \Z_{\Delta}(G)$. Without loss of generality we can assume that $A$ is Borel. For each $m\in\N$ let $A_m=A\cap (B_m\setminus \overline{B_{m-1}})$, where we agree that $B_0=\emptyset$. Recall that the Haar measure $\mu$ is outer regular, i.e. for each Borel set $L$ we have $$\mu(L)=\inf\{\mu(U): L\subseteq U\hbox{ and }U\hbox{ is open}\}.$$ Since $\mu(\overline{B_m}\setminus B_m)=0$ (see condition (4) of Definition~\ref{test}), the outer regularity of $\mu$ implies that for each $m\in\N$ there exists an open subset $V_m\subseteq G$ such that $$\overline{B_m}\setminus B_m\subseteq V_m\qquad \hbox{ and }\qquad \mu(V_m)\leq \frac{1}{2^{m}}.$$
 Since the sets $A_m$, $m\in\N$ are Borel, for each $m\in\N$ the outer regularity of $\mu$ yields an open subset $U_m\subseteq G$ such that 

$$A_m\subseteq U_m, \qquad U_{m}\cap \overline{B_{m-1}}=\emptyset \qquad \hbox{and} \qquad \mu(U_m)\leq \mu(A_m)+\frac{1}{2^m}.$$ 

Put $U=\bigcup_{m\in \N}U_m$, $V=\bigcup_{m\in\N}V_m$ and $W_A=U\cup V$. Observe that for each $m\in\N$ we have 
$W_A\cap B_m\subseteq V\cup\bigcup_{i\leq m}U_i$. 
It follows that
$$
\mu(W_A\cap B_m)\leq 1+\sum_{i=1}^{m}\mu(U_i)\leq 1+\sum_{i=1}^{m}(\mu(A_i)+\frac{1}{2^i})\leq \mu(A\cap B_{m})+2.
$$
Hence, taking into account that $\lim_{n\to \infty}\mu(B_n)=\infty$, we get the following: 
$$
\lim_{m\to\infty}\frac{\mu(W_A\cap B_m)}{\mu(B_m)}\leq \lim_{m\to\infty}\frac{\mu(A\cap B_m)}{\mu(B_m)}+\lim_{m\to\infty}\frac{2}{\mu(B_m)}=0+0=0.
$$
Thus $W_A\in \Z_{\Delta}(G)$, and consequently the ideal $\Z_{\Delta}(G)$ has a base consisting of open sets. It follows that $\chi(\Z_{\Delta}(G))=|G|^+>|S|$.

Since $S\notin \Z_{\Delta}(G)$, the map $\phi: \Z_{\Delta}(G)\rightarrow o(G)$ defined by $\phi(A)=W_A$ witnesses that $S$ is not $\Z_{\Delta}(G)$-compact in $G$. 
\end{proof}


Note that $\Delta=\{B_m:m\in\N\}$ (see Equation~\ref{Bm}) is a F{\o}lner chain in $\mathbb R^n$. It is easy to check that the family $\Delta\subseteq \Z_{\Delta}(\mathbb R^n)$ witnesses that the ideal $\Z_{\Delta}(\mathbb R^n)$ is nicely exhaustive with respect to every subsemigroup $S$ of $\mathbb R^n_+$ such that $S\notin \mathcal Z_{\Delta}(\mathbb R^n)$. Then 
Theorem~\ref{AD} applied to $\mathbb R^n$ and the aforementioned F{\o}lner chain $\Delta$ implies the following.

\begin{corollary}
Let $S$ be a subsemigroup of $\mathbb R^n_+$ such that $S\notin\Z_\Delta(\mathbb R^n)$, and $C\in \Z_{\Delta}(\mathbb R^n)$. Then there exists an open set $P\in \Z_\Delta(\mathbb R^n)^+$ such that $C\subseteq P$, and for every finite coloring of $S\setminus P$ there exists a family $\mathcal A$ of cardinality $\geq \nN$ consisting of sequences in $S\setminus P$ which satisfies the following conditions: 
   \begin{enumerate}[\rm(i)]
   \item $\FS(A)$ is a closed discrete subset of $\mathbb R^n$ for all $A\in\mathcal A$;
  \item the set $\bigcup_{A\in\mathcal A}\FS(A)$ is monochromatic;
  \item $\FS(A)\cap \FS(B)=\emptyset$ for all distinct $A,B\in\mathcal A$;   
   \item $\bigcup_{A\in\mathcal A}\FS(A)\in \Z_\Delta(\mathbb R^n)^+$.
  \end{enumerate}  
\end{corollary}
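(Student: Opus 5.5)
The plan is to derive the statement directly from Theorem~\ref{AD}, applied to $G=\mathbb R^n$ with the F{\o}lner chain $\Delta=\{B_m:m\in\N\}$ from Equation~\ref{Bm}. We then check the two additional features: condition (i) in its strong form, and the cardinality bound $|\mathcal A|\geq\nN$.

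\textbf{Step 1: the hypotheses of Theorem~\ref{AD}.} We first verify that $\Delta$ is a F{\o}lner chain in $\mathbb R^n$.
\begin{itemize}
\item Each $\overline{B_m}$ is a compact cube contained in $B_{m+1}$, and the $B_m$ exhaust $\mathbb R^n$.
\item The boundary of each cube has Lebesgue measure zero.
\item For $g\in\mathbb R^n$, the set $(g+B_m)\setminus B_m$ lies in a shell of width $\|g\|_\infty$, so its measure is $O(m^{n-1})$. Since $\mu(B_m)=(2m)^n$, the ratio tends to $0$.
\end{itemize}
Theorem~\ref{AD} then yields an open set $P\in\Z_\Delta(\mathbb R^n)^+$ with $C\subseteq P$, together with, for each finite coloring, a family $\mathcal A$ satisfying (ii)--(iv).

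\textbf{Step 2: condition (i) via nice exhaustiveness.} To get condition (i) in the strong form, that each $\FS(A)$ is closed discrete, we check that $\Z_\Delta(\mathbb R^n)$ is nicely exhaustive with respect to $S$, witnessed by $\{B_m\}$.
\begin{itemize}
\item Each $B_m$ is open, lies in $\Z_\Delta(\mathbb R^n)$ because it has finite measure while $\mu(B_k)\to\infty$, and the family is increasing with union $\mathbb R^n$.
\item $S\setminus B_m$ is nonempty, since otherwise $S\subseteq B_m\in\Z_\Delta(\mathbb R^n)$, contradicting $S\notin\Z_\Delta(\mathbb R^n)$.
\item $S\setminus B_m$ is a left ideal of $S$. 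If $y\in S\setminus B_m$, some coordinate satisfies $|y_i|\geq m$, and as $y\in\mathbb R^n_+$ this means $y_i\geq m$. For any $s\in S\subseteq\mathbb R^n_+$ we have $s_i>0$, so $(s+y)_i>m$ and hence $s+y\notin B_m$.
\end{itemize}
Clause (v) of Theorem~\ref{AD} then gives (i).

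\textbf{Step 3: the bound $|\mathcal A|\geq\nN$.} This is the only step requiring a small argument. Each $\FS(A)$ is countable, so if $|\mathcal A|<\nN$, then $X=\bigcup_{A\in\mathcal A}\FS(A)$ has cardinality below $\nN$ when $\nN>\aleph_0$, which holds since $\nN\geq\aleph_1$. By the definition of $\nN$, it follows that $\mu^*(X)=0$. Outer regularity then gives open sets $U_k\supseteq X$ with $\mu(U_k)<1/k$, and a $G_\delta$ null Borel set $X'=\bigcap_k U_k\supseteq X$. This $X'$ witnesses $X\in\Z_\Delta(\mathbb R^n)$, contradicting (iv).

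I expect no genuine obstacle here. The most delicate points are the left-ideal verification in Step~2, which uses positivity of coordinates, and the observation in Step~3 that sets of size less than $\nN$ are Lebesgue null and so lie in the ideal.
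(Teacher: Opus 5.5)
Your proposal is correct and takes the paper's route: apply Theorem~\ref{AD} to $\mathbb R^n$ with the cube F{\o}lner chain $\Delta=\{B_m:m\in\N\}$, check that $\Delta$ witnesses nice exhaustiveness with respect to $S\subseteq\mathbb R^n_+$ to get (i), and deduce $|\mathcal A|\geq\nN$ from (iv). The only cosmetic difference is in the last step: the paper uses the fact that every element of $\Z_\Delta(\mathbb R^n)^+$ has infinite outer Lebesgue measure, while you use the weaker but sufficient fact that Lebesgue-null sets lie in $\Z_\Delta(\mathbb R^n)$.
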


The inequality $|\mathcal A|\geq \nN$ in the above corollary follows from condition (iv) and the fact that each element of $\Z_\Delta(\mathbb R^n)^+$ has infinite outer Lebesgue measure.

A topological group $G$ is called {\em precompact} if for each open neighborhood $U$ of $1_G$ there exists a finite set $\{g_1,\ldots,g_n\}\subseteq G$ such that $\bigcup_{i\leq n}g_iU=G$.
By~\cite[Theorem 3.7.10]{AT} a topological group $G$ is precompact if and only if $G$ is topologically isomorphic to a subgroup of a compact topological group. The following result implies that Theorems~\ref{main},~\ref{appl},~\ref{Hf} and~\ref{AD} are no longer valid for precompact topological groups.

\begin{proposition}
Let $G$ be a precompact topological group. Then $1_G\in \overline{\FS(A)}$ for each infinite sequence $A\subseteq G$.     
\end{proposition}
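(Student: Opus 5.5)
We will show that every neighborhood of $1_G$ meets $\FS(A)$; since $A$ is infinite, this requires finding, for each open $U\ni 1_G$, some product $a(n_1)a(n_2)\cdots a(n_k)$ with $n_1<\dots<n_k$ lying in $U$. The plan is a pigeonhole argument on partial products, made possible by precompactness. Fix an enumeration $A=\{a(n):n\in\N\}$ and an open neighborhood $U$ of $1_G$. Since $G$ is a topological group, choose an open neighborhood $V$ of $1_G$ with $V^{-1}V\subseteq U$ (take $V=V^{-1}$ symmetric with $VV\subseteq U$). By precompactness there is a finite set $\{g_1,\ldots,g_m\}$ with $\bigcup_{i\le m} g_iV=G$.

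Next consider the partial products $p_n=a(1)a(2)\cdots a(n)$ for $n\in\N$. There are infinitely many of them and only $m$ translates $g_iV$, so some $i$ has $p_n,p_{n'}\in g_iV$ for two indices $n<n'$. Then
$$p_n^{-1}p_{n'}\in (g_iV)^{-1}g_iV=V^{-1}V\subseteq U,$$
and
$$p_n^{-1}p_{n'}=a(n+1)a(n+2)\cdots a(n'),$$
which is an element of $\FS(A)$: the single element $a(n+1)$ if $n'=n+1$, and otherwise a product with strictly increasing indices. Hence $U\cap\FS(A)\neq\emptyset$. Since $U$ was arbitrary, $1_G\in\overline{\FS(A)}$.

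The only point needing care is the identity $p_n^{-1}p_{n'}=a(n+1)\cdots a(n')$. It uses associativity and the existence of inverses in $G$, and it would fail in a mere semigroup. It also uses that the products in $\FS(A)$ are taken in increasing index order, so the cancellation must be performed on the left, which is why $V$ is chosen so that $V^{-1}V\subseteq U$. Beyond this, the argument uses only that $G$ is a group with continuous multiplication and inversion at $1_G$ and is precompact; injectivity of the sequence is not needed.
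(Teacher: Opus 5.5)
Your proof is correct, and it takes a genuinely different route from the paper. The paper argues through ultrafilters. It embeds $G$ in a compact group $H$ and takes an idempotent ultrafilter $u$ on $H$ with $\FS(A)\in u$ (\cite[Theorem 5.12]{HS}). It then quotes \cite[Proposition~5.1]{BZl}: an idempotent ultrafilter on a compact group converges to the identity. Hence every member of $u$, in particular $\FS(A)$, accumulates at $1_G$.

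You instead apply the pigeonhole principle to the partial products $p_n=a(1)\cdots a(n)$ against a finite cover $\{g_iV\}$, then cancel on the left: $p_n^{-1}p_{n'}=a(n+1)\cdots a(n')\in V^{-1}V\subseteq U$. This uses nothing beyond the definition of precompactness and continuity of the group operations at $1_G$. It is self-contained and avoids the embedding theorem, the Stone--\v{C}ech machinery and any separation axiom. It also produces an explicit witness, a consecutive block product, and applied to tails it gives $1_G\in\overline{\FS(\{a(n):n>m\})}$ for every $m$.

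The paper's route yields a stronger statement at no extra cost. Because $u$ converges to $1_G$, every set in $u$ accumulates at $1_G$. For instance, for any finite coloring of $\FS(A)$, the color class lying in $u$ already has $1_G$ in its closure. Your argument gives only the assertion about $\FS(A)$ itself, which is exactly what the proposition claims.
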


\begin{proof}
Let $H$ be a compact topological group which contains $G$ as a topological subgroup. Fix an infinite sequence $A\subseteq G$. By~\cite[Theorem 5.12]{HS}, there exists an idempotent ultrafilter $u$ on $H$ such that $\FS(A)\in u$. It follows from~\cite[Proposition~5.1]{BZl} that $u$ converges to $1_H=1_G$. Thus $1_G\in \overline{\FS(A)}$, as required.   
\end{proof}

\end{document}